\newtheorem{thm}{Theorem}
\newtheorem{lem}[thm]{Lemma}
\newtheorem{rmk}{Remark}
\begin{document}

\title{A nonlinear aggregation type classifier}

\author{Alejandro Cholaquidis, Ricardo Fraiman, Juan Kalemkerian\\ and Pamela Llop}
\maketitle

\begin{abstract}
We introduce a nonlinear aggregation type classifier for
 functional data defined on a separable and complete metric space.
 The new rule is built up from a collection of  $M$ arbitrary
 training classifiers. If the classifiers are consistent, then  so is the aggregation rule. Moreover, asymptotically the aggregation rule behaves as well as the best of the $M$
 classifiers. The results of a small si\-mu\-lation are reported both, for high dimensional
 and functional data, and a real data example is analyzed. 
\end{abstract}

\textit{Keywords:} Functional data; supervised classification; non-linear aggregation.

 \section{Introduction} 
 Supervised classification is still one of the  hot topics for high dimensional and functional data 
due to the importance of their applications and the intrinsic difficulty in a general setup. 
In this context, there is a vast literature on classification methods which include: 
linear classification, $k$-nearest neighbors and kernel rules, classification based on partial 
least squares, reproducing kernels or depth measures. Complete surveys of the literature are the 
works by Ba\'illo et al. \cite{baillo_2011}, Cuevas \cite{cuevas_2012} and Delaigle and Hall \cite{hall_2012}. In 
the book \textit{Contributions in infinite-dimensional statistics and related topics} \cite{Bong_2014}, there 
are also several recent advances in supervised and unsupervised classification. See for instance, 
Chapters 2, 5, 22 or 48, or directly, Chapter 1 of this issue (Bongiorno et al. \cite{Bong1_2014}). In this context, very recently there have been of great interest to develop aggregation methods. In particular, there is a large list of linear aggregation methods like boosting (Breiman \cite{breiman_96}, Breiman \cite{breiman_98}), random forest (Breiman \cite{breiman_2001}, Biau et al. \cite{biau_2008},  Biau \cite{biau_2012}), among others. All these  methods exhibit an important improvement when combining a subset of  classifiers to produce a new one. Most of the contributions to the aggregation literature have been proposed for nonparametric regression, a problem closely related to classification rules,  
which can be obtained just by plugging  in the estimate of the regression function into the Bayes rule (see for instance, Yang \cite{yang_2004} and Bunea et al. \cite{bunea_2007}). Model  selection (select the optimal single model from a list of models), convex aggregation (search for the optimal convex combination of a given set of estimators), and linear aggregation (select the optimal linear combination of estimators) are important contributions among a large list.

In the finite dimensional setup, Mojirsheibani  \cite{mojir1} and  \cite{mojir2} introduced a 
combined classifier showing strong consistency under someway hard to verify assumptions involving 
the Vapnik Chervonenkis dimension of the random partitions of the set of classifiers, which are  
non--valid in the functional setup. Very recently Biau et al. \cite{biau_2013} introduced a 
new nonlinear aggregation strategy for the regression problem called COBRA, extending the ideas in 
Mojirsheibani  \cite{mojir1} to the more general setup of nonparametric regression in $\mathbb R^d$. In the 
same direction but for the classification problem in the infinite dimensional setup, we 
extend the ideas in Mojirsheibani \cite{mojir1} to construct a classification rule which combines, in a 
nonlinear way, several classifiers to construct an optimal one. We point out that our rule 
allows to combine methods of very different nature, taking advantage of the abilities of each 
expert and allowing to adapt the method to different class of datasets. Even though our 
classifier allows aggregate experts of the same nature, the  possibility of combine classifiers of 
different character, improves the use of existing rules as the bagged nearest neighbors classifier 
(see for instance Hall and Samworth \cite{hall_2005}). As in Biau et al. \cite{biau_2013}, we also introduce a more flexible form of the rule which discards a small percentage $\alpha$ of those preliminary experts that behaves differently from the rest. Under very mild assumptions, we prove consistency, obtain rates of convergence and show some optimality properties of the aggregated rule. To build up this classifier, we use the inverse function (see also Fraiman et al. \cite{fraiman_etal_97}) of each preliminary experts which makes the proposal particularly well designed for high dimensional data avoiding the curse of dimensionality. It also performs well in functional data settings. 
  
In Section \ref{setup} we introduce the new  classifier in the general context of a separable and 
complete metric space which combines, in a nonlinear way, the decision of $M$ experts (classifiers). 
A more flexible rule is also considered. In Section \ref{resultados} we state our two main results 
regarding consistency, rates of convergence and asymptotic optimality of the classifier. 
Asymptotically, the new rule performs as the best of the $M$ classifiers used to build it up. 
Section \ref{simus} is devoted to show through some simulations the performance of the new 
classifier in high dimensional and functional data for moderate sample sizes.  A real data example is also considered. All proofs are given 
in the Appendix. 
  
\section{The setup}\label{setup}

Throughout the manuscript $\mathcal{F}$ will denote a separable and complete metric space, $(X,Y)$ a
random pair taking values in $\mathcal{F} \times \{0,1\}$ and $\mu$  the probability measure of
$X$. The elements of the training sample  $\mathcal{D}_n\hspace{-0.1cm}=\hspace{-0.1cm}\{(X_1,Y_1),\dots,(X_n,Y_n)\}$, are iid
random elements with the same distribution as the pair $(X,Y)$.  The regression function is denoted by $\eta(x)= \mathbb{E}(Y|X=x) = \mathbb P(Y=1|X=x)$, the Bayes rule by 
 $g^*(x)=\mathbb{I}_{\{\eta(x)>1/2\}}$ and the optimal Bayes risk by $L^*=\mathbb{P}\big(g^*(X)\neq Y\big)$.

In order to define our classifier, we split the sample $\mathcal{D}_n$ into two subsamples
$\mathcal{D}_k=\big\{(X_1,Y_1),\dots,(X_k,Y_k)\big\}$ and
$\mathcal{E}_l=\big\{(X_{k+1},Y_{k+1}),\dots,(X_n,Y_n)\big\}$ with $l=n-k\geq 1$. With
$\mathcal{D}_k$ we build up $M$ classifiers $g_{mk}:\mathcal{F}\rightarrow \{0,1\}$, $m=1,\dots,M$
which we place in the vector $\mathbf{g_k}(x) \doteq \big(g_{1k}(x),\dots,g_{Mk}(x)\big)$ and,
following some ideas in \cite{mojir1}, with $\mathcal{E}_l$ we construct
our aggregate classifier as,
\begin{equation}\label{clasificador}
g_T(x)=\mathbb{I}_{\{T_n(\mathbf{g_k}(x))>1/2\}},
\end{equation}
where
\begin{equation}\label{agregados}
T_n(\mathbf{g_k}(x))=\sum_{j=k+1}^n W_{n,j}(x)Y_{j},\quad x\in \mathcal{F},
\end{equation}
with weights $W_{n,j}(x)$ given by
\begin{equation}\label{pesos}
W_{n,j}(x)=\frac{\mathbb{I}_{\{\mathbf{g_k}(x)=\mathbf{g_k}(X_j)\}}}{\sum_{i=k+1}^n\mathbb{I}_{\{
\mathbf{g_k}(x)=\mathbf{g_k} (X_i)\}}}.
\end{equation}
Here, $0/0$ is assumed to be $0$. Like in \cite{biau_2013}, for $0 \le \alpha < 1$ a more flexible 
version of the classifier, called $g_T(x,\alpha)$, can be defined replacing the 
weights in (\ref{pesos}) by
\begin{equation}\label{pesos2}
W_{n,j}(x)=\frac{\mathbb{I}_{\{\frac{1}{M}\sum_{m=1}^M \mathbb{I}_{\{g_{mk}(x)
= g_{mk}(X_j)\}} \ge 1-\alpha\}}}{\sum_{i=k+1}^n\mathbb{I}_{\{\frac{1}{M}\sum_{m=1}^M
\mathbb{I}_{\{g_{mk}(x) = g_{mk}(X_i)\}} \ge 1-\alpha\}}}.
\end{equation}
More precisely, the more flexible version of the classifier (\ref{clasificador}) is given by
\begin{equation}\label{clasifgen}
g_T(x,\alpha)=\mathbb{I}_{\{T_n(\mathbf{g_k}(x),\alpha)>1/2\}},
\end{equation}
where $T_n(\mathbf{g_k}(x),\alpha)$ is defined as in (\ref{agregados}) but with the 
weights given by (\ref{pesos2}).
Observe that if we choose $\alpha=0$ in (\ref{pesos2}) and (\ref{clasifgen}) we obtain the weights given in (\ref{pesos})
and the classifier (\ref{clasificador}) respectively. 
\begin{rmk}
\begin{itemize}
\item[a)] The type of nonlinear aggregation used to define our classifiers turns out
to be quite natural. Indeed, we give a weight different from zero
to those $X_j$ which classify $x$ in the same group as the whole
set of classifiers $\mathbf{g_k}(X_j)$ (or $100(1 - \alpha) \%$ of
them). 
\item[b)] Since we are using the inverse functions of the classifiers $g_{mk}$, observations which
are far from $x$  for which the condition mentioned in a) is fulfilled are involved in the
definition of the classification rule. This may be very important in the case of high dimensional
data to avoid the curse of dimensionality. This is illustrated in Figure \ref{fig:example}, where 
we show two samples of points: one uniformly distributed in the square $[-2,2]\times[-2,2]$ (filled 
black points) and another uniformly distributed in the $L_{\infty}$-ring $[-2,2]\times[-1,1]$ (empty 
black points). We also show two points to classify, the empty red and the filled magenta triangles 
together with their corresponding voters, empty green squares and filled blue 
squares, respectively. As we can see, observations 
that are far from the triangles are also involved in the classification. 
\end{itemize}
\end{rmk}

 \begin{figure}[!t]
 \begin{center}
 \includegraphics[width=0.49\textwidth]{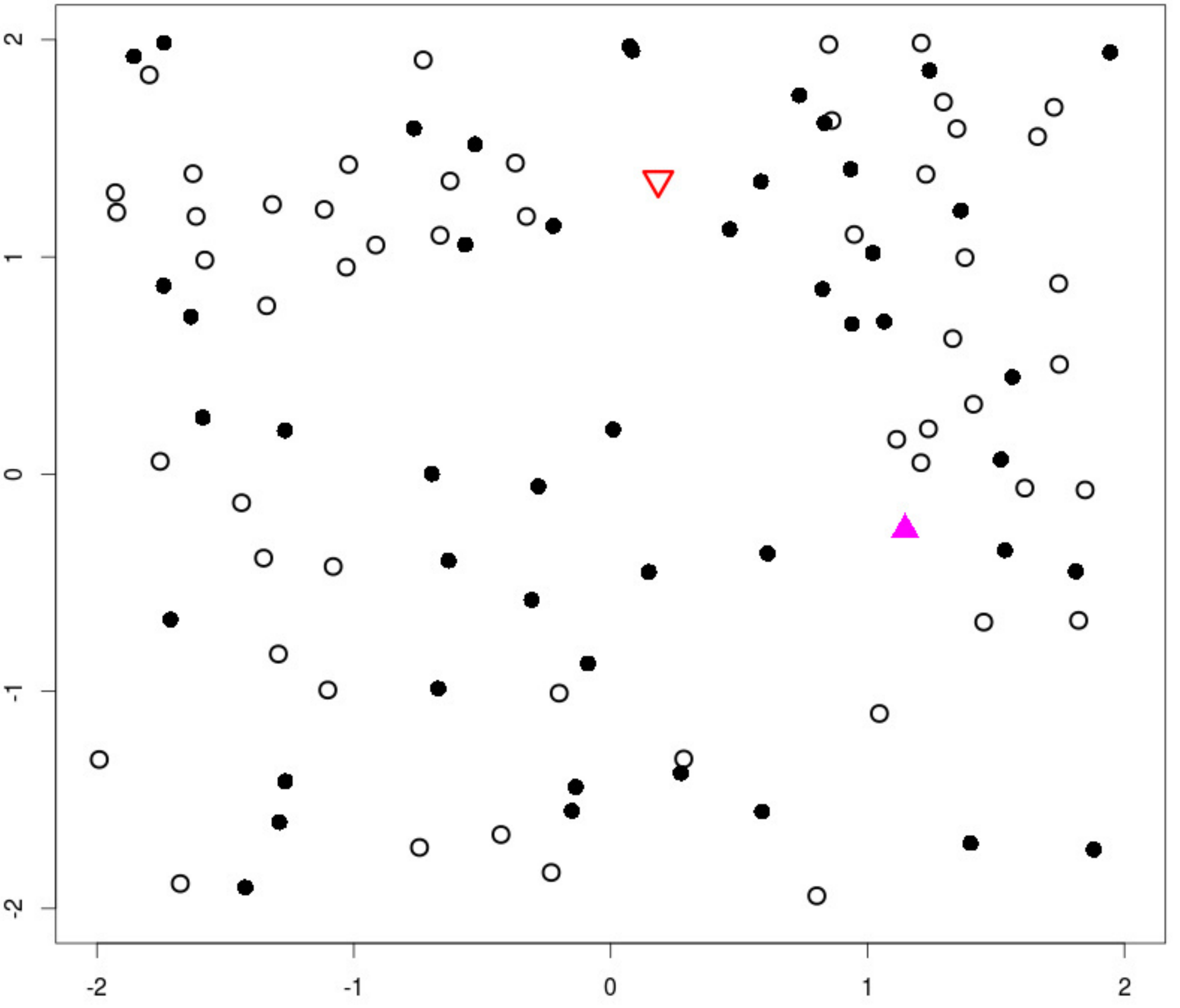}
 \includegraphics[width=0.49\textwidth]{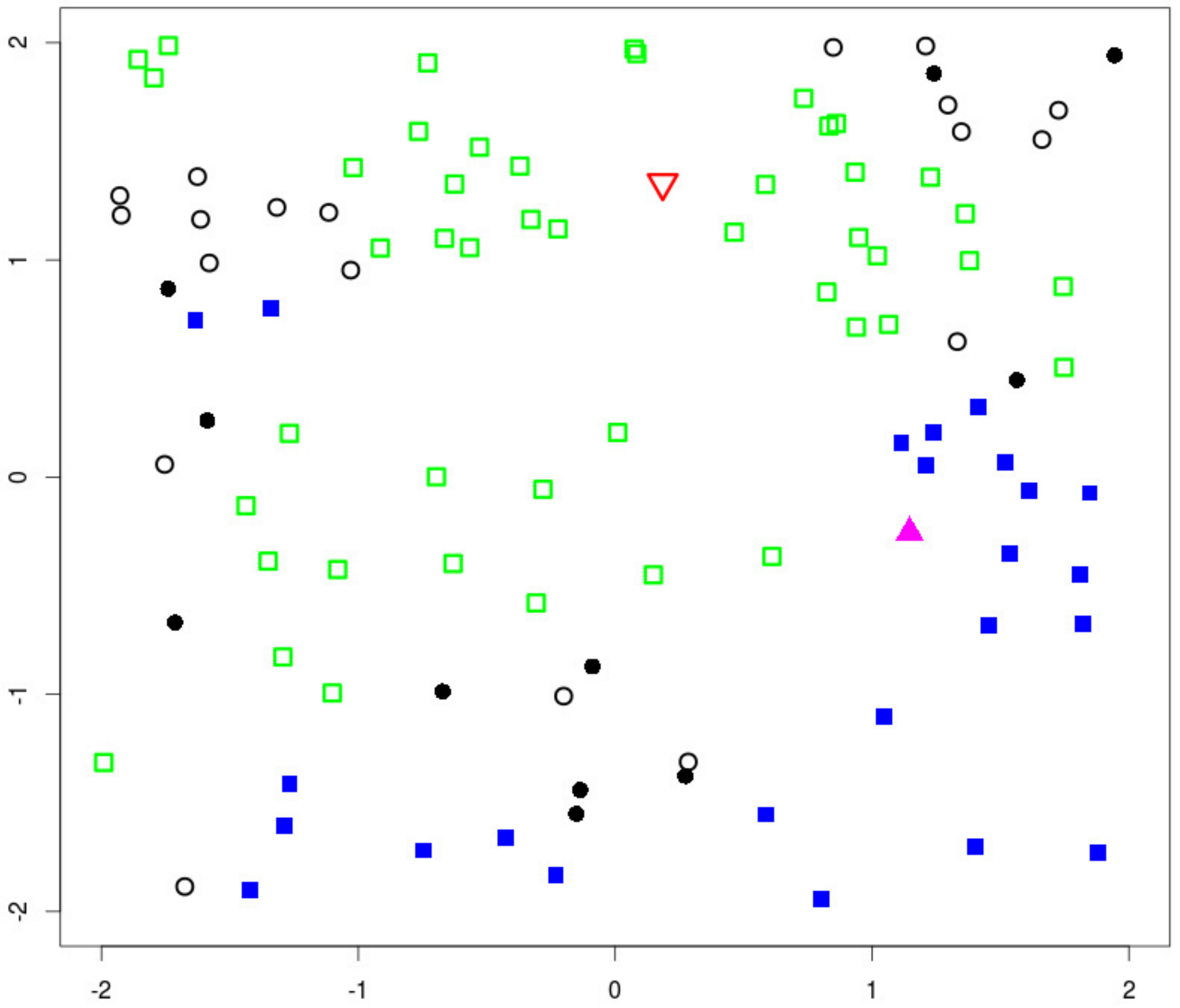}
 \end{center}
 \vspace{-0.5cm} 
 \caption{Left: Sample points corresponding to two populations (black filled and empty circles) and two points to classify (red empty and magenta filled triangles). Right: in empty green squares the voters for the red empty triangle and in filled blue squares the voters for the magenta filled triangle.}
 \label{fig:example}
 \end{figure}
	
\section{Asymptotic results}\label{resultados}
In this section we show two asymptotic results for the nonlinear aggregation classifier
The first one shows that the classifier
$g_T(X,\alpha)$ is consistent if, for $0\leq \alpha <0.5$, at least $R\geq (1-\alpha)M$ of them are consistent. 
Moreover, rates of convergence for $g_T(X,\alpha)$ (and $g_T(X)$) are obtained assuming we know the rates of
convergence of the $R$ consistent experts. The second result, shows that $g_T(X)$
behaves asymptotically as the best of the $M$ classifiers used to build it
up. Both results are proved under mild conditions. Throughout this section we will 
use the notation $\mathbb P_{\mathcal{D}_k}(\cdot)=\mathbb P(\cdot|\mathcal{D}_k)$.

\begin{thm} \label{consistencia} \label{cor1} Assume that, for every $m=1, \ldots, R$, the classifier
$g_{mk}$
converges in probability to $g^*$ as $k \to \infty$, with $R\geq M(1-\alpha)$ and $\alpha \in [0,1/2)$. 
  Let us assume that $\mathbb P(Y=1|g^*(X)=1)>1/2$ and $\mathbb
P(Y=0|g^*(X)=0)>1/2$, then 
\begin{itemize}
\item[a)] $\displaystyle \lim_{min\{k,l\}\rightarrow \infty} \mathbb P_{\mathcal{D}_k}(g_T(X,\alpha)\neq
Y)-L^*=0.$
\item[b)] Let $\beta_{mk}\rightarrow 0$ as $k \to \infty$, for $m=1,\dots,R$ and $\displaystyle \bm{\beta_{Rk}}=\max_{m=1,\dots,R}\beta_{mk}$.
  If $ \mathbb{P}_{\mathcal{D}_k}\big(g^*(X)\neq g_{mk}(X)\big)=\mathcal{O}(\beta_{mk})$, then, for $k$ large enough,
\begin{equation} \label{ordteo1}
 \mathbb P_{\mathcal{D}_k}(g_T(X,\alpha)\neq Y)-L^*=\mathcal{O}\Big(\max\big\{\exp(-Cl),\bm{\beta_{Rk}}\big\}\Big),
\end{equation}
for some constant $C>0$. 
\end{itemize}
\end{thm}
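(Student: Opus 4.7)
My strategy is to reduce both parts of the theorem to a \emph{Bayes oracle} version of the rule, for which $T_n$ collapses to an empirical version of $\mathbb{E}[Y\mid g^*(X)]$; the hypotheses then finish the job via Hoeffding and the classical excess-risk identity.

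\emph{Step 1: reduction to the oracle weights.} I would start by comparing $W_{n,j}(x)$ in (\ref{pesos2}) with the oracle weights
$$W_{n,j}^*(x)=\frac{\mathbb{I}_{\{g^*(x)=g^*(X_j)\}}}{\sum_{i=k+1}^n \mathbb{I}_{\{g^*(x)=g^*(X_i)\}}}.$$
The key arithmetic is that, because $R\geq M(1-\alpha)$ with $\alpha<1/2$, one has both $R/M\geq 1-\alpha$ and $(M-R)/M\leq \alpha<1-\alpha$. Hence, as long as the $R$ consistent classifiers all agree with $g^*$ at both $x$ and $X_j$, the condition $M^{-1}\sum_m \mathbb{I}_{\{g_{mk}(x)=g_{mk}(X_j)\}}\geq 1-\alpha$ is \emph{equivalent} to $g^*(x)=g^*(X_j)$. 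Using convergence in probability (for (a)) or the explicit rates $\beta_{mk}$ (for (b)), together with Fubini under the $\mathcal{D}_k$-conditioning, I would bound the $\mathcal{D}_k$-probability that $W_{n,j}(X)$ differs from $W_{n,j}^*(X)$ for some $j$ by $O(R\bm{\beta_{Rk}})=O(\bm{\beta_{Rk}})$.

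\emph{Step 2: large-sample analysis of the oracle.} After this substitution, $T_n(\mathbf{g_k}(X),\alpha)$ is replaced by the plain empirical estimator
$$\widetilde T_n(X)=\frac{\sum_{j=k+1}^n Y_j\,\mathbb{I}_{\{g^*(X)=g^*(X_j)\}}}{\sum_{i=k+1}^n \mathbb{I}_{\{g^*(X)=g^*(X_i)\}}}$$
of $\mathbb{E}[Y\mid g^*(X)]$. Conditioning separately on $\{g^*(X)=0\}$ and $\{g^*(X)=1\}$ and applying Hoeffding's inequality to numerator and denominator gives exponential concentration of $\widetilde T_n(X)$ around its target at rate $\exp(-Cl)$, with $C$ controlled by the strictly positive gaps $|\mathbb{P}(Y=y\mid g^*(X)=y)-1/2|$ guaranteed by the hypothesis. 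Thresholding at $1/2$ therefore recovers $g^*(X)$ on an event of $\mathcal{D}_k$-probability at least $1-O(\exp(-Cl))$.

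\emph{Step 3: passage to the excess risk, and obstacles.} The standard identity
$$\mathbb{P}_{\mathcal{D}_k}(g_T(X,\alpha)\neq Y)-L^*=\mathbb{E}_{\mathcal{D}_k}\big[|2\eta(X)-1|\,\mathbb{I}_{\{g_T(X,\alpha)\neq g^*(X)\}}\big]\leq \mathbb{P}_{\mathcal{D}_k}\big(g_T(X,\alpha)\neq g^*(X)\big)$$
turns the bounds from Steps 1 and 2 into bounds on the excess risk: part (a) follows by letting $\min\{k,l\}\to\infty$, part (b) by combining both rates into $O(\max\{\exp(-Cl),\bm{\beta_{Rk}}\})$. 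The delicate point is Step 1, because the weights share a random denominator over all of $\mathcal{E}_l$: a naive union bound over $j\in\{k+1,\dots,n\}$ would cost a factor $l$ and destroy the rate. Integrating out $X_j$ first (permissible since the $X_j$ are iid and the $g_{mk}$ are $\mathcal{D}_k$-measurable) is what yields the clean $O(\bm{\beta_{Rk}})$ bound. A minor secondary nuisance is the event that the denominator of $\widetilde T_n(X)$ vanishes, which is exponentially rare provided both $\mathbb{P}(g^*(X)=0)$ and $\mathbb{P}(g^*(X)=1)$ are positive — and the hypothesis on the class-conditional probabilities already ensures that neither can be zero.
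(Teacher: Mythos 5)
Your Steps 2 and 3 are sound and close in spirit to the paper's argument: the excess-risk reduction is the bound (\ref{boundl3}) from Lemma \ref{lema}, the Hoeffding step with a margin coming from $\mathbb P(Y=1\mid g^*(X)=1)>1/2$ is the heart of the paper's proof, and your arithmetic observation that $R\ge M(1-\alpha)$ with $\alpha<1/2$ forces the aggregation condition to coincide with agreement of $g^*$ is exactly the paper's identification of $C_\alpha$ with $\{g^*(X)=1\}$ in (\ref{condaalpha}). The genuine gap is in Step 1. The event $\{\exists\, j\in\{k+1,\dots,n\}: W_{n,j}(X)\neq W^*_{n,j}(X)\}$ essentially contains the event that at least one of the $l$ points $X_{k+1},\dots,X_n$ is misclassified by one of the first $R$ experts; since these points are iid and independent of $\mathcal D_k$, that event has conditional probability of order $1-(1-c\,\bm{\beta_{Rk}})^{l}\asymp\min\{1,\,l\,\bm{\beta_{Rk}}\}$, not $\mathcal{O}(\bm{\beta_{Rk}})$. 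This is not a union-bound artifact that ``integrating out $X_j$ first'' can remove; it is the true size of the event. Consequently your reduction yields at best the rate $\max\{\exp(-Cl),\,l\,\bm{\beta_{Rk}}\}$ in part b), which is weaker than (\ref{ordteo1}), and for part a) it yields nothing when $l\to\infty$ fast enough that $l\,\bm{\beta_{Rk}}\not\to 0$ (the theorem allows $k$ and $l$ to grow in any relative manner).

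The paper avoids this by never requiring the experts to be correct at the sample points of $\mathcal E_l$. It conditions only on the query point being well classified, $\{\mathbf{g_{Rk}}(X)=\mathbf 1\}$, which costs a single $\mathcal{O}(\bm{\beta_{Rk}})$ term, and then keeps the \emph{actual} weights: writing $T_n=\sum_j Z_jY_j/\sum_i Z_i$ with $Z_j$ the indicator that $(X_j,Y_j)$ falls in the cell $A_\alpha$, it shows that $E\big(Z_j(Y_j-1/2)\big)\ge c>0$ for $k$ large. That positivity is a population-level statement about $\mathbb P_{\mathcal D_k}(Y=1\mid A_\alpha)$, derived from $\mathbb P(\cap_m B_{mk})>1-\varepsilon$; the cell $A_\alpha$ is allowed to contain misclassified $X_j$'s, and Hoeffding is applied once to the iid sum $\sum_j Z_j(Y_j-1/2)$. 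If you wish to salvage your oracle-weight plan, you would need an additional concentration step showing that the number of indices $j$ with a wrong weight is $o(l)$ with high probability and that this cannot flip the sign of the order-$l$ oracle sum --- at which point you have essentially reconstructed the paper's expectation computation in a more roundabout way.
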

\begin{rmk} 
\begin{itemize}
\item[a)] The assumption 
\begin{equation} \label{condteo1}
1) \ \mathbb P(Y=1|g^*(X)=1)>1/2 \qquad 2)\ \mathbb P(Y=0|g^*(X)=0)>1/2,
\end{equation} 
is really mild. It just requires that if the Bayes rule $g^*(X)$ takes the value $1$ (or 0) the probability that $Y=1$ is greater than the probability that $Y=0$ (the probability that $Y=0$ is greater than the probability that $Y=1$). Moreover since the Bayes risk $L^*\leq 1/2$ one of the conditions in (\ref{condteo1}) is always fulfilled.  
\item[b)] It is well known that in the finite dimensional case, if the regression function $\eta$ verifies a Lipschitz condition and $X$ is bounded
supported, the accuracy of classical classification rules is $\mathcal{O}(n^{-2/(d+2)})$. Therefore the right hand side of (\ref{ordteo1}) is
$$\mathcal{O}\Big(\max\big\{\exp(-Cl),k^{-2/(d+2)}\big\}\Big),$$
and the optimal rate for $\max\big\{\exp(-Cl),k^{-2/(d+2)}\big\}$ is attained for $l\sim \log(k)$.
\item[c)] The choice of the parameters $\alpha$ and $l$ is an important issue. From a practical 
point of view, we suggest to perform a cross validation procedure to select the values of the 
corresponding parameters. See Section \ref{realdata} for an implementation in a real data example.
\end{itemize}
\end{rmk}
In order to state the optimality result we introduce some additional notation. Let $\mathbb{C}
\doteq \{0,1\}^M$ and let us call $\nu \in \mathbb{C}$. Calling $\nu(m)$  the $m$-th entry of the vector $\nu$, we define the following subsets
\[
A_{\nu}^0 \doteq \bigcap_{m=1}^M g_{mk}^{-1}(\nu(m)) \times \{0\},
\hspace{0.3cm}  A_{\nu}^1 \doteq \bigcap_{m=1}^M g_{mk}^{-1}(\nu(m))
\times \{1\},
\]
\[\text{ and }  \hspace{0.1cm} A_{\nu} =A_{\nu}^0 \cup A_{\nu}^1.
\]
For each $\nu \in \mathbb{C}$, we consider the assumption:
$$
(\mathcal{H}) \quad H(\mathcal{D}_k) := \mathbb{P}_{\mathcal{D}_k}\big((X,Y) \in A_{\nu}^1\big)
- \mathbb{P}_{\mathcal{D}_k}\big((X,Y) \in A_{\nu}^0\big)\neq 0 \hspace{0.3cm} \text{a.s.}
$$

\begin{thm}\label{optimalidad}

\begin{itemize}
\item[1)] 
For each $m=1,\ldots, M$,
\[
\mathbb{P}_{\mathcal{D}_k}\big(g_T(X) \neq Y\big)  -
\mathbb{P}_{\mathcal{D}_k}\big(g_{mk}(X) \neq Y\big)\leq \mathcal{O}_k\big(l^{-1/2}\big),
\]
which implies that, 
\[
\lim_{l \to \infty} \mathbb{P}_{\mathcal{D}_k}\big(g_T(X) \neq Y\big)  \le \min_{1\le m \le M
}
\mathbb{P}_{\mathcal{D}_k}\big(g_{mk}(X) \neq Y\big).
\]
\item [2)]Under assumption ($\mathcal{H}$) we obtain a better approximation rate,
\[
\mathbb{P}_{\mathcal{D}_k}\big(g_T(X) \neq Y\big)  -
\mathbb{P}_{\mathcal{D}_k}\big(g_{mk}(X) \neq Y\big)\leq \mathcal{O}_k\big(\exp(-K_1l)\big).
\]
\end{itemize}
\end{thm}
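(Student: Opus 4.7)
The key idea is to regard $T_n(\mathbf{g_k}(x))$, conditional on $\mathcal{D}_k$, as an empirical plug-in estimator of the ``oracle'' regression function for predicting $Y$ from the finite random vector $\mathbf{g_k}(X)\in\mathbb{C}$. Accordingly, for each $\nu\in\mathbb{C}$ with $p_\nu:=\mathbb{P}_{\mathcal{D}_k}(\mathbf{g_k}(X)=\nu)>0$, I would introduce
\[
\tilde\eta(\nu):=\mathbb{E}_{\mathcal{D}_k}[Y\mid \mathbf{g_k}(X)=\nu]=\frac{\mathbb{P}_{\mathcal{D}_k}((X,Y)\in A_\nu^1)}{p_\nu},\qquad \tilde g^*(\nu):=\mathbb{I}_{\{\tilde\eta(\nu)>1/2\}}.
\]
Since $g_{mk}$ is the $m$-th coordinate of $\mathbf{g_k}$, hence a particular $\sigma(\mathbf{g_k}(X))$-measurable classifier, the optimality of the Bayes rule on this sub-$\sigma$-algebra gives
\[
\mathbb{P}_{\mathcal{D}_k}\bigl(\tilde g^*(\mathbf{g_k}(X))\ne Y\bigr)\le \min_{1\le m\le M}\mathbb{P}_{\mathcal{D}_k}\bigl(g_{mk}(X)\ne Y\bigr),
\]
so both statements reduce to bounding the excess risk $\Delta_l:=\mathbb{P}_{\mathcal{D}_k}(g_T(X)\ne Y)-\mathbb{P}_{\mathcal{D}_k}(\tilde g^*(\mathbf{g_k}(X))\ne Y)$.

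Applying the standard plug-in identity cell by cell on the partition $\{\mathbf{g_k}(X)=\nu\}_{\nu}$ and averaging over $\mathcal{E}_l\mid\mathcal{D}_k$ yields
\[
\Delta_l \;\le\; \sum_{\nu\in\mathbb{C}}p_\nu\,\mathbb{E}_{\mathcal{D}_k}\!\Bigl[|2\tilde\eta(\nu)-1|\,\mathbb{I}_{\{g_T(\nu)\ne\tilde g^*(\nu)\}}\Bigr]\;\le\; 2\sum_{\nu\in\mathbb{C}}p_\nu\,\mathbb{E}_{\mathcal{D}_k}\bigl|T_n(\nu)-\tilde\eta(\nu)\bigr|,
\]
the second inequality coming from the fact that a cell disagreement forces $|T_n(\nu)-\tilde\eta(\nu)|\ge|\tilde\eta(\nu)-1/2|$. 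Writing $N_\nu:=\sum_{j=k+1}^n\mathbb{I}_{\{\mathbf{g_k}(X_j)=\nu\}}\sim\mathrm{Bin}(l,p_\nu)$, one checks that conditionally on $\{N_\nu\ge 1\}$, $T_n(\nu)$ is the empirical mean of $N_\nu$ iid $\mathrm{Bernoulli}(\tilde\eta(\nu))$ draws, while $T_n(\nu)=0$ on $\{N_\nu=0\}$. For Part 1, I would split each expectation over the events $\{N_\nu=0\}$, $\{1\le N_\nu<lp_\nu/2\}$ and $\{N_\nu\ge lp_\nu/2\}$: the first has probability $(1-p_\nu)^l$, the second is handled by the Chernoff lower-tail bound for binomials, and on the third Jensen's inequality gives $\mathbb{E}_{\mathcal{D}_k}[|T_n(\nu)-\tilde\eta(\nu)|\mid N_\nu\ge lp_\nu/2]\le(2lp_\nu)^{-1/2}$. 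Summing the $2^M$ contributions weighted by $p_\nu$ delivers the $\mathcal{O}_k(l^{-1/2})$ rate.

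For Part 2, assumption $(\mathcal{H})$ is precisely $p_\nu(2\tilde\eta(\nu)-1)\ne 0$ for every $\nu$, and the finiteness of $\mathbb{C}$ then provides an a.s.\ strictly positive margin $\delta(\mathcal{D}_k):=\min_{\nu:\,p_\nu>0}|\tilde\eta(\nu)-1/2|>0$. On the event $\{|T_n(\nu)-\tilde\eta(\nu)|<\delta(\mathcal{D}_k)\}$ the plug-in $g_T$ coincides with $\tilde g^*$ at $\nu$, so the middle expression for $\Delta_l$ above (which carries the bounded factor $|2\tilde\eta(\nu)-1|\le 1$) is dominated by $\sum_\nu p_\nu\,\mathbb{P}_{\mathcal{D}_k}(|T_n(\nu)-\tilde\eta(\nu)|\ge\delta(\mathcal{D}_k))$. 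Conditioning on $N_\nu$ and invoking Hoeffding's inequality, combined once more with the Chernoff lower bound $N_\nu\ge lp_\nu/2$, produces the exponential rate $\mathcal{O}_k(\exp(-K_1 l))$ with $K_1$ proportional to $\delta(\mathcal{D}_k)^2\min_{\nu:\,p_\nu>0}p_\nu$.

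The main technical subtlety is the simultaneous control of the random cell count $N_\nu$ and of the Binomial fluctuations of the numerator inside the cell: one must rule out the $0/0$ convention biasing $T_n(\nu)$ (done via the Chernoff tail ensuring $N_\nu\gtrsim lp_\nu$ with overwhelming probability) while controlling the inner mean by its variance in Part 1 and by Hoeffding's inequality under the margin $\delta(\mathcal{D}_k)$ in Part 2. All resulting constants depend on $\mathcal{D}_k$ through the vector $(p_\nu)_{\nu\in\mathbb{C}}$, and in Part 2 also through $\delta(\mathcal{D}_k)$, which is precisely what the notation $\mathcal{O}_k$ in the theorem reflects.
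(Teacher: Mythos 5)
Your proof is correct, but it takes a genuinely different route from the paper's. The paper never introduces the quantized Bayes rule $\tilde g^*$: it expands both $\mathbb{P}_{\mathcal{D}_k}(g_T(X)\neq Y)$ and $\mathbb{P}_{\mathcal{D}_k}(g_{mk}(X)\neq Y)$ as sums over the cells $A_\nu$, $\nu\in\{0,1\}^M$, and verifies by an explicit sign computation that the difference of the limiting terms, $\sum_{\nu:\nu(m)=0,\,p_\nu^1>p_\nu^0}(p_\nu^0-p_\nu^1)+\sum_{\nu:\nu(m)=1,\,p_\nu^1<p_\nu^0}(p_\nu^1-p_\nu^0)$, is nonpositive; your appeal to the optimality of the Bayes rule among all $\sigma(\mathbf{g_k}(X))$-measurable classifiers packages that computation more conceptually and makes it transparent that $g_T$ competes with all $M$ experts simultaneously. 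The probabilistic core also differs: the paper observes that on cell $\nu$ the event $\{T_n>1/2\}$ is exactly $\{\sum_{j=k+1}^n T_j^{\nu}>0\}$ with $T_j^{\nu}=\mathbb{I}_{\{(X_j,Y_j)\in A_{\nu}^1\}}-\mathbb{I}_{\{(X_j,Y_j)\in A_{\nu}^0\}}$, a sum of $l$ i.i.d.\ bounded variables over the \emph{whole} of $\mathcal{E}_l$, so a single Hoeffding bound (and Berry--Esseen in the cells with $p_\nu^1=p_\nu^0$, which is where its $l^{-1/2}$ originates) suffices with no conditioning on the random cell occupancy; you instead condition on $N_\nu\sim\mathrm{Bin}(l,p_\nu)$ and pay an extra Chernoff lower-tail step, but in exchange you avoid Berry--Esseen entirely, your $l^{-1/2}$ coming from the $L^1$ fluctuation of a Binomial proportion via Jensen. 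Both arguments are valid; the paper's signed-indicator device is slicker, yours is more elementary and modular. One bonus your framing makes visible: had you kept the exact excess-risk identity $\Delta_l=\sum_\nu p_\nu\,|2\tilde\eta(\nu)-1|\,\mathbb{P}_{\mathcal{D}_k}\bigl(\mathbb{I}_{\{T_n(\nu)>1/2\}}\neq\tilde g^*(\nu)\bigr)$ rather than the cruder bound $2\sum_\nu p_\nu\,\mathbb{E}_{\mathcal{D}_k}|T_n(\nu)-\tilde\eta(\nu)|$, the zero-margin cells would contribute exactly nothing (every rule has the same risk there), so the exponential rate would follow without $(\mathcal{H})$; the $l^{-1/2}$ of part 1 is an artifact of the relaxation you use, just as in the paper it is an artifact of not tracking the cancellation of the two Berry--Esseen corrections in $I$ and $II$.
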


 \section{A small simulation study}\label{simus}
 In this section we present  the performance of the aggregated classifier  in two different
 scenarios. The first one corresponds to high dimensional data while, in the second one, we
consider  two simulated models for functional data analyzed in Delaigle and Hall \cite{hall_2012}. \\ 

\subsection*{High dimensional setting}
In this setting we show the performance of our method by analyzing data ge\-ne\-ra\-ted in $ \mathbb R^{150}$ in the following way: we ge\-ne\-rate $n+200$ iid  uniform random 
variables in $[0,1]$, say $Z_1,\ldots,Z_{n+200}$.
For each $i=1,\ldots, n+200$, if $Z_i >1/4$, we generate
a random  variable $X_i \in  \mathbb R^{150}$ with uniform distribution in  $[-2,2]^{150}$ and
set $Y_i=1$. If $Z_i \le 1/4$, we generate a random variable $X_i\in \mathbb R^{150}$ with uniform
distribution in $\tau_v([-2,2]^{150})$ where $\tau_v$ is the translation along the direction
$(v,\dots,v)\in \mathbb{R}^{150}$ for $v = 1/4$ and set $Y_i =0$. Then we split the sample into two
 subsamples: with the first $n$ pairs $(X_i,Y_i)$, we build the training sample, with the
remaining $200$ we build the testing sample. We consider two cases: the 
homogeneous case, where we aggregate classifiers of the same nature and in 
the heterogeneous case, where we aggregate experts of different nature.
\begin{itemize}
\item  \underline{Homogeneous case}:  $M$ $k$-nearest neighbor classifiers  with the number of neighbors taken as follows:
\begin{enumerate}
\item\label{1} we fix $M=8$ consecutive odd numbers;
\item\label{2} we choose at random $M=10$ different odd integers between $1$ and \\
\mbox{$\min\{\sum_{i=1}^k Y_i, k-\sum_{i=1}^k Y_i\}$}.
\end{enumerate}
\end{itemize}

In Table \ref{tabla11}, we report the mean and standard deviation (in brackets) of 
the misclassification error rate for case  \ref{1}, when compared with the nearest neighbor rules 
build up with a sample size $n$
 taking  $5,7,9,11,13,15,17,19$ nearest neighbors (these classifiers are denoted by $g_{mn}$ for 
$m=1,\dots,8$). In Table \ref{tabla12} we report the median and MAD  (in brackets) 
 of the misclassification error rate for this case. 

\begin{table}[t!]
\footnotesize{ 
 \begin{center}
  \begin{tabular}{|c|c|c|c|c|c|c|c|c|c|}
 \hline
 n/k    &$g_T(\cdot)$& $g_{1n}$&$g_{2n}$&$g_{3n}$&$g_{4n}$&$g_{5n}$&$g_{6n}$&$g_{7n}$&$g_{8n}$\\
 \hline
 400/300&\textbf{.027}&.045&.043&.042&.042&.043&.043&.043&.044\\
&(.014)&(.016)&(.017)&(.017)&(.018)&(.018)&(.018)&(.019)&(.019)\\
\hline     
 600/400&\textbf{.023}&.039&.036&.035&.035&.035&.036&.037&.037\\
&(.012)&(.015)&(.016)&(.015)&(.015)&(.015)&(.015)&(.016)&(.016)\\
 \hline
 800/600&\textbf{.020}&.037&.034&.033&.033&.033&.033&.033&.033\\
&(.010)&(.014)&(.013)&(.013)&(.013)&(.013)&(.013)&(.013)&(.014)\\
 \hline
 \end{tabular}
 \end{center}}
 \vspace{-0.5cm}
 \caption{Mean and standard deviation of the misclassification error rate over $500$ replicates 
for  $\mathbb{R}^{150}$ with fixed number of neighbors.}\label{tabla11}
 \end{table}
 \begin{table}[t!]
\footnotesize{ 
 \begin{center}
  \begin{tabular}{|c|c|c|c|c|c|c|c|c|c|}
 \hline
  \hspace{-0.3cm} n/k  \hspace{-0.3cm}  &$ \hspace{-0.2cm} g_T(\cdot)\hspace{-0.2cm} $& $g_{1n}$&$g_{2n}$&$g_{3n}$&$g_{4n}$&$g_{5n}$&$g_{6n}$&$g_{7n}$&$g_{8n}$\\
 \hline
400/300&\textbf{.025}&.045&.040&.040&.040&.040&.040&.040&.040\\
&(.015)&(.015)&(.015)&(.015)&(.015)&(.015)&(.015)&(.015)&(.015)\\
\hline
600/400&\textbf{.020}&.035&.035&.035&.035&.035&.035&.035&.035\\
&(.015)&(.015)&(.015)&(.015)&(.015)&(.015)&(.015)&(.015)&(.015)\\
\hline
800/600&\textbf{.020}&.035&.035&.030&.030&.030&.030&.032&.030\\
&(.007)&(.015)&(.015)&(.015)&(.015)&(.015)&(.015)&(.011)&(.015)\\
 \hline
 \end{tabular}
 \end{center}}
  \vspace{-0.5cm}
 \caption{Median and MAD of the misclassification error rate over $500$ replicates for  
$\mathbb{R}^{150}$ with fixed number of neighbors.}\label{tabla12}
 \end{table}

In Table \ref{tabla21} we report the mean of the misclassification error rate and standard 
deviation for case \ref{2}, with the original aggregated classifier and the two more flexible 
versions: $\alpha=1/8$ and  $\alpha=1/4$. In this table we compare the performance of our rules with 
the (optimal) cross validated nearest neighbor classifier computed with $k$ and also with $n$. In 
Table \ref{tabla22} we report the median and MAD of the misclassification error rate for this 
case. 

\begin{table}[b!]
 \footnotesize{
 \begin{center}
 \begin{tabular}{|c|c|c|c|c|c|}
 \hline
 n/k  & $g_T(\cdot)$ & $g_T(\cdot, 1/8)$ &  $g_T(\cdot, 1/4)$ & $gcv_{n}$ & $gcv_{k}$\\
 \hline
 400/300&\textbf{.029}&.038&.046&.040&.044\\
&(.016)&(.019)&(.021)&(.017)&(.018)\\
 600/400&\textbf{.029}&.039&.047&.037&.043\\
&(.016)&(.019)&(.022)&(.016)&(.018)\\
 800/600&\textbf{.027}&.036&.046&.033&.036\\
&(.014)&(.018)&(.020)&(.014)&(.015)\\
\hline
 \end{tabular}
 \end{center}}
  \vspace{-0.5cm}
 \caption{Mean and standard deviation of the misclassification error rate over $500$ replicates for 
$\mathbb{R}^{150}$ with the number of neighbors chosen at random.}\label{tabla21}
 \end{table}
  \begin{table}[b!]
 \footnotesize{
 \begin{center}
 \begin{tabular}{|c|c|c|c|c|c|}
 \hline
 n/k  & $g_T(\cdot)$ & $g_T(\cdot, 1/8)$ &  $g_T(\cdot, 1/4)$ & $gcv_{n}$ & $gcv_{k}$\\
 \hline
 400/300 &\textbf{.025}&.035&.045&.040&.042\\
&(.015)&(.015)&(.022)&(.015)&(.019)\\
 600/400&\textbf{.028}&.035&.045&.035&.040\\
&(.019)&(.015)&(.022)&(.015)&(.015)\\    
 800/600&\textbf{.025}&.035&.045&.035&.035\\
&(.015)&(.015)&(.022)&(.015)&(.015)\\
 \hline
 \end{tabular}
 \end{center}}
  \vspace{-0.5cm}
 \caption{Median and MAD of the misclassification error rate over $500$ replicates for 
$\mathbb{R}^{150}$ with the number of neighbors chosen at random.}\label{tabla22}
 \end{table}

\begin{itemize}
\item \underline{Heterogeneous case}: $M=5$ classifiers: 3 $k$-nearest neighbor rules with fixed values of $k$, the Fisher and the random forest classifiers.
\end{itemize}
Here we take $3,5,7$ nearest neighbors (denoted by $g_{mn}$ for $m=1,2,3$), the Fisher classifier (denoted by $g_{F}$) and the random forest classifier (denoted by $g_{RF}$). In Table \ref{tabla31} we report the averaged misclassification error rates and standard deviation and in Table \ref{tabla32} we report the median and MAD for this case.  
 \begin{table}[t!]
\footnotesize{ 
 \begin{center}
  \begin{tabular}{|c|c|c|c|c|c|c|}
 \hline
 n/k   &$g_T(\cdot)$& $g_{1n}$&$g_{2n}$&$g_{3n}$&$g_{F}$&$g_{RF}$\\
 \hline
400/300&.012&.049&.043&.041&.020&\textbf{.004}\\
&(.011)&(.016)&(.017)&(.017)&(.011)&(.004)\\
\hline  
 600/400&.008&.047&.040&.037&.012&\textbf{.001}\\
&(.007)&(.015)&(.015)&(.015)&(.008)&(.002)\\
 \hline
 800/600&.007&.043&.036&.034&.009&\textbf{.000}\\
&(.007)&(.015)&(.015)&(.014)&(.007)&(.002)\\
\hline
\end{tabular}
\end{center}}
 \vspace{-0.5cm}
\caption{Mean and standard deviation of the misclassification error rate over $500$ replicates for 
 $\mathbb{R}^{150}$ with fixed number of neighbors, Fisher classifier and random 
forest.}\label{tabla31}
\end{table}

\begin{table}[h!]
\footnotesize{ 
 \begin{center}
  \begin{tabular}{|c|c|c|c|c|c|c|}
 \hline
 n/k    &$g_T(\cdot)$& $g_{1n}$&$g_{2n}$&$g_{3n}$&$g_{F}$&$g_{RF}$\\
 \hline
400/300&.010&.050&.040&.040&.020&\textbf{.000}\\
&(.007)&(.015)&(.015)&(.015)&(.015)&(.000)\\
\hline
600/400&.005&.045&.040&.035&.010&\textbf{.000}\\
&(.007)&(.015)&(.015)&(.015)&(.007)&(.000)\\
\hline
800/600&.005&.040&.035&.035&.010&\textbf{.000}\\
&(.007)&(.015)&(.015)&(.015)&(.007)&(.000)\\
 \hline
 \end{tabular}
 \end{center}}
  \vspace{-0.5cm}
 \caption{Median and MAD of the misclassification error rate  over $500$ replicates for  
$\mathbb{R}^{150}$ with fixed number of neighbors, Fisher classifier and random 
forest.}\label{tabla32}
 \end{table}

\begin{figure}[b!]
\begin{center}
\includegraphics[width=0.45\textwidth]{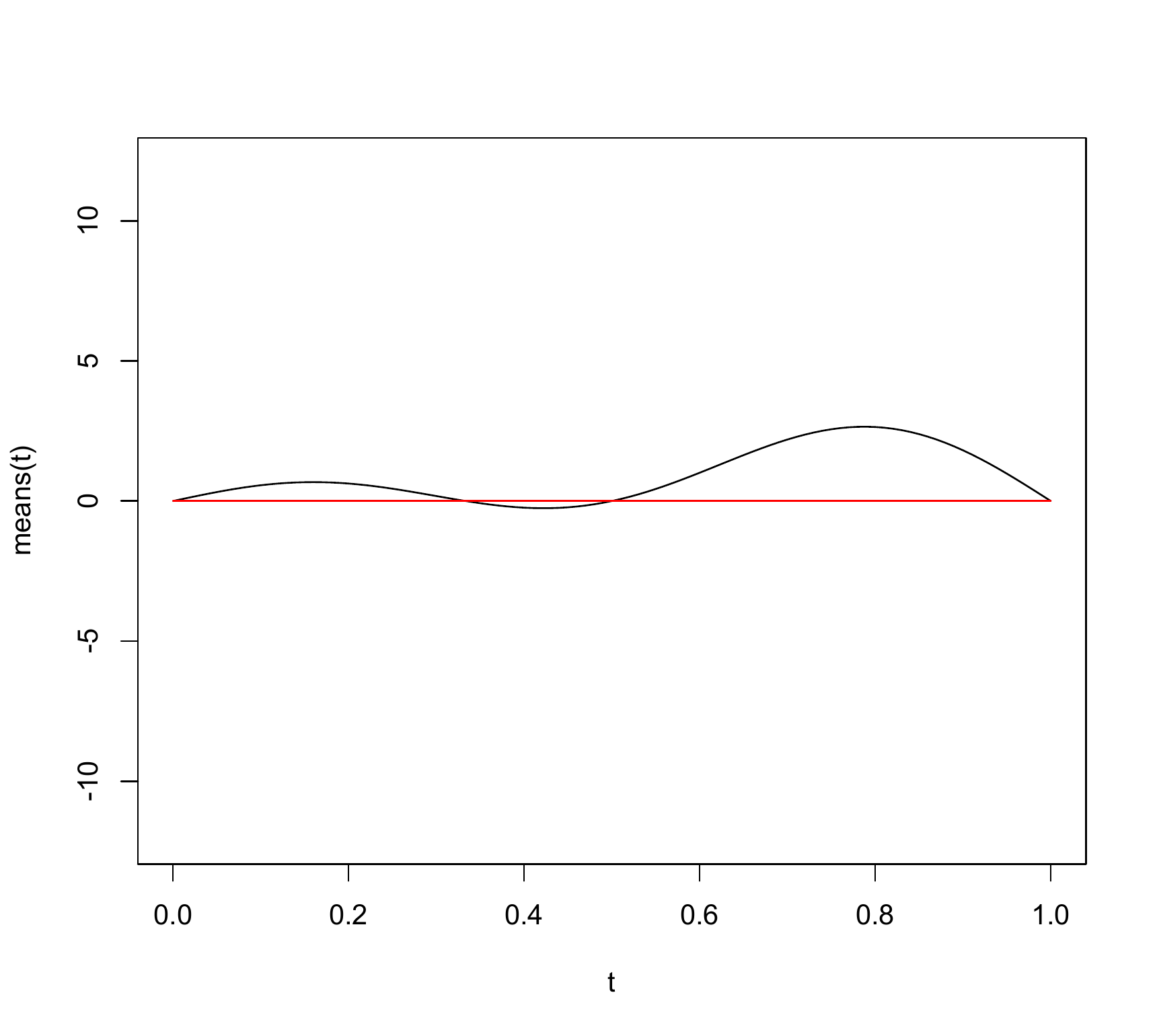} 
\includegraphics[width=0.45\textwidth]{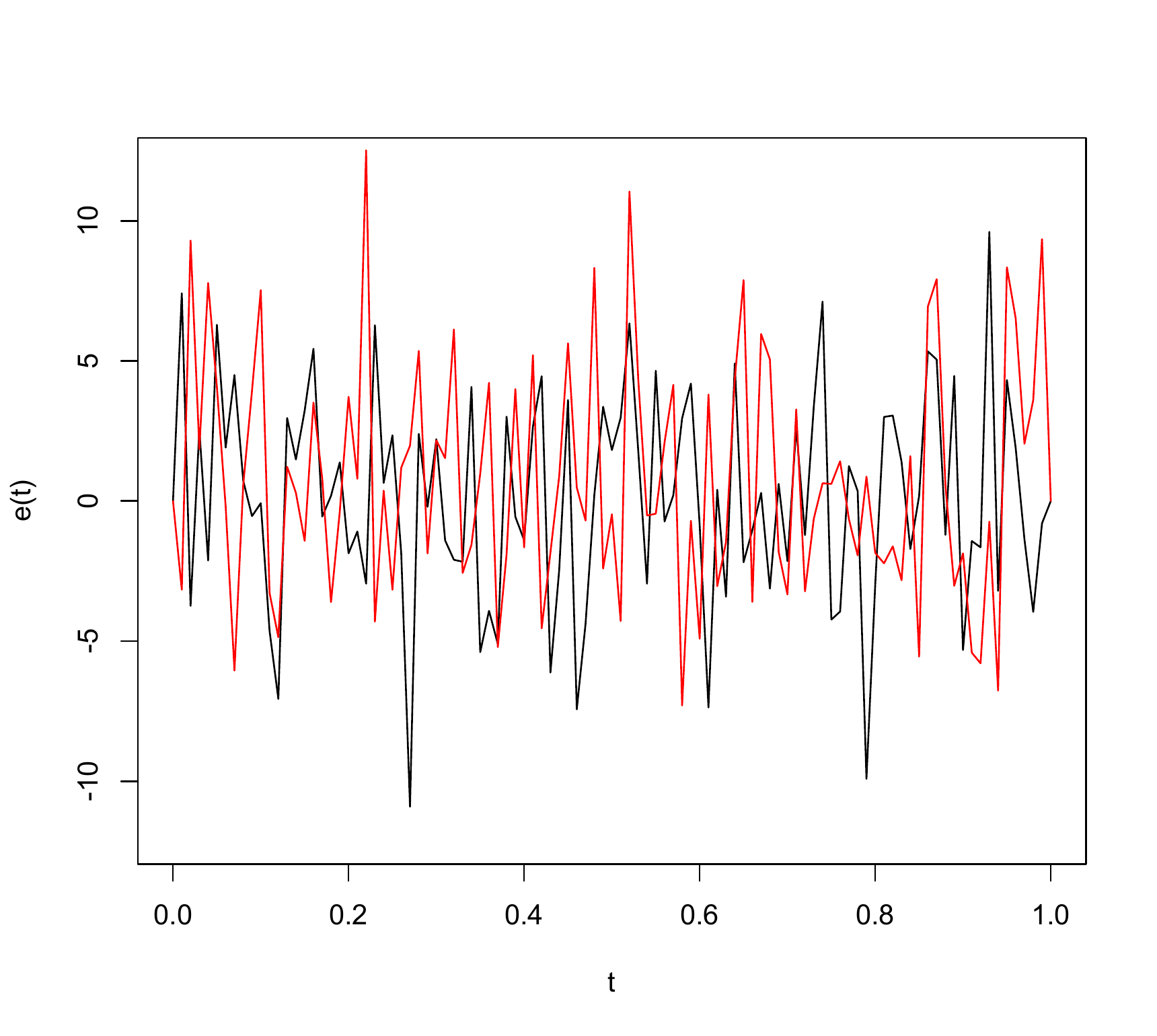}  
  \vspace{-0.5cm}
\caption{Mean curve (Left) and Error curve (Right) of the two populations of Model II.}
\label{figmodiimean}
\end{center}
\end{figure}
\subsection*{Functional data setting}

In this setting we show the performance of our method by analyzing the following two models 
considered in Delaigle and Hall \cite{hall_2012}:
\begin{itemize}
\item Model I: We generate two samples of size $n/2$ from different populations following the
model
\begin{equation*}\label{modelo-hall}	
X_{pi}(t) = \sum_{j=1}^6 \mu_{p,j} \phi_j(t) + e_{pi}(t), \hspace{1cm} p = 1,2,\hspace{0.5cm}  i =
1,\ldots, n/2,
\end{equation*}
where $\phi_j(t) = \sqrt{2} \sin(\pi j t)$, $\mu_{1,j}$ and $\mu_{2,j}$ are, respectively, the j-th coordinate of the mean vectors $\mu_{1} = (0,-0.5, 1, -0.5, 1, -0.5)$, and $\mu_{2} = (0, -0.75, 0.75, -0.15, 1.4, 0.1)$ while the errors are given by
\[
e_{pi}(t) = \sum_{j=1}^{40} \sqrt{\theta_j} Z_{pj} \phi_j(t), \hspace{1cm} p = 1,2,
\]
with $Z_{pj}\sim \mathcal{N}(0,1)$ and $\theta_j = 1/j^2$.

\item Model II: We generate two samples of size $n/2$ from different populations following the model
\begin{equation*}\label{modelo-hall2}	
X_{pi}(t) = \sum_{j=1}^3 \mu_{p,j} \phi_j(t) + e_{pi}(t), \hspace{1cm} p = 1,2,\hspace{0.5cm}  i =
1,\ldots, n/2,
\end{equation*}
where $\mu_{1} = 0.75\cdot(1, -1, 1)$ and $\mu_{2,j}$ the j-th coordinate of $\mu_{2} \equiv 0$,
$\theta_j = 1/j^2$ and the errors are given by
\[
e_{pi}(t) = \sum_{j=1}^{40} \sqrt{\theta_j} Z_{pj} \phi_j(t), \hspace{1cm} p = 1,2,
\]
with $Z_{pj}\sim \mathcal{N}(0,1)$ and $\theta_j = \exp\{- (2.1 - (j-1)/20)^2 \}$. 

This second model looks more challenging since although the means of the two populations are quite
different, the error process is very wiggly, concentrated in high frequencies (as shown in Figure
\ref{figmodiimean} left and right panel, respectively). So in this case, in order to  apply our
classification method, we have first performed the Nadaraya-Watson kernel smoother
(taking a normal kernel) to the training sample with different values of the bandwidths 
for each of the two populations. The values for the bandwidths were chosen via cross-validation 
with our classifier, varying the bandwidths between $.1$ and $.7$ (in intervals of length $.05$).
The optimal values, over 200 replicates, were $h_1=.15$ for the first population
(with mean $\mu_1$) and $h_2=.7$ for the second one. Finally, we apply the classification
method to the raw (non-smoothed) curves of the testing sample.
\end{itemize}
\begin{table}[!t]
\begin{center}
\footnotesize{
\begin{tabular}{|c|c|c|c|c|c|c|c|c|c|}
\hline
 \hspace{-0.1cm}Model \hspace{-0.2cm}   & $g_T(\cdot)$ & $\hspace{-0.15cm}g_T(\cdot, 1/5)\hspace{-0.15cm}$ &  $\hspace{-0.15cm}g_T(\cdot,  2/5)\hspace{-0.15cm}$ & $\hspace{-0.15cm}g_T(\cdot,3/5)\hspace{-0.15cm}$ &$\hspace{-0.15cm}g_{1n}\hspace{-0.15cm}$ & $\hspace{-0.15cm}g_{2n}\hspace{-0.15cm}$ & $\hspace{-0.15cm}g_{3n}\hspace{-0.15cm}$  & $\hspace{-0.15cm}g_{4n}\hspace{-0.15cm}$  & $\hspace{-0.15cm}g_{5n}\hspace{-0.15cm}$\\
\hline
\multirow{2}{*}{I}   & .013  & .005 & .004 &.005  &.017  &.007  &.004  &.003  &\textbf{.002}\\
                     &(.011) &(.005)&(.005)&(.006)&(.011)&(.007)&(.004)&(.004)&(.003)\\
 \hline                             
  \multirow{2}{*}{II}& .110  & .074 &.068  &.069  &.124  &.083 &.070  &.066  &\textbf{.064}\\
                     &(.029)&(.019)&(.018)&(.018)&(.030)&(.020)&(.018)&(.016)&(.017)\\
  \hline 
 \end{tabular}}
 \end{center}
  \vspace{-0.5cm}
 \caption{Mean and standard deviation of the misclassification error rate over $200$ replicates for 
models I and II.}\label{tabla51}
 \end{table}
\begin{table}[!b]
\begin{center}
\footnotesize{
\begin{tabular}{|c|c|c|c|c|c|c|c|c|c|}
\hline
 \hspace{-0.1cm}Model \hspace{-0.2cm}   & $g_T(\cdot)$ & $\hspace{-0.15cm}g_T(\cdot,1/5)\hspace{-0.15cm}$ &  $\hspace{-0.15cm}g_T(\cdot,  2/5)\hspace{-0.15cm}$ & $\hspace{-0.15cm}g_T(\cdot,3/5)\hspace{-0.15cm}$ &$\hspace{-0.15cm}g_{1n}\hspace{-0.15cm}$ & $\hspace{-0.15cm}g_{2n}\hspace{-0.15cm}$ & $\hspace{-0.15cm}g_{3n}\hspace{-0.15cm}$  & $\hspace{-0.15cm}g_{4n}\hspace{-0.15cm}$  & $\hspace{-0.15cm}g_{5n}\hspace{-0.15cm}$\\
\hline
\multirow{2}{*}{I}   & .010  & .005 & .004 &.005  &.015  &.005  &\textbf{.000} &\textbf{.000}  &\textbf{.000}\\
                     &(.007) &(.007)&(.007)&(.007)&(.007)&(.007)&(.000)&(.000)&(.000)\\
 \hline                             
  \multirow{2}{*}{II}& .105  & .070 &\textbf{.065}  &.070  &.120  &.080 &.070  &\textbf{.065}  &\textbf{.065}\\
                     &(.030)&(.015)&(.015)&(.015)&(.030)&(.022)&(.022)&(.015)&(.015)\\
  \hline 
 \end{tabular}}
 \end{center}
  \vspace{-0.5cm}
 \caption{Median and MAD of the misclassification error rate  over $200$ replicates for models I 
and II.}\label{tabla52}
 \end{table}
In Table \ref{tabla51} we report the averaged misclassification error rate and the standard deviation over $200$ replications for models  I and II, taking $n=90$, $k=60$, $l=30$, and $\alpha = 0,1,2,3$. In the whole training sample (of $n$
functions) the $n/2$ labels for every population were chosen at random. The test sample consist of $200$ data, taking $100$ of every population. Here, $g_{mn} = (2m-1)$-nearest neighbor rule for $m=1,\dots,5$. In Table \ref{tabla52} we report the median of the misclassification error rate and the MAD. For Model I we get a better performance than the PLS-Centroid Classifier proposed by Delaigle and Hall \cite{hall_2012}. For model II PLS-Centroid Classifier clearly outperforms our classifier although we get a quite small missclassification error, just using a combination of five nearest neighbor estimates. 

\section{A real data example: Analysis of spectrograms} \label{realdata}

\begin{figure}[h]
\begin{center}
\includegraphics[width=6cm]{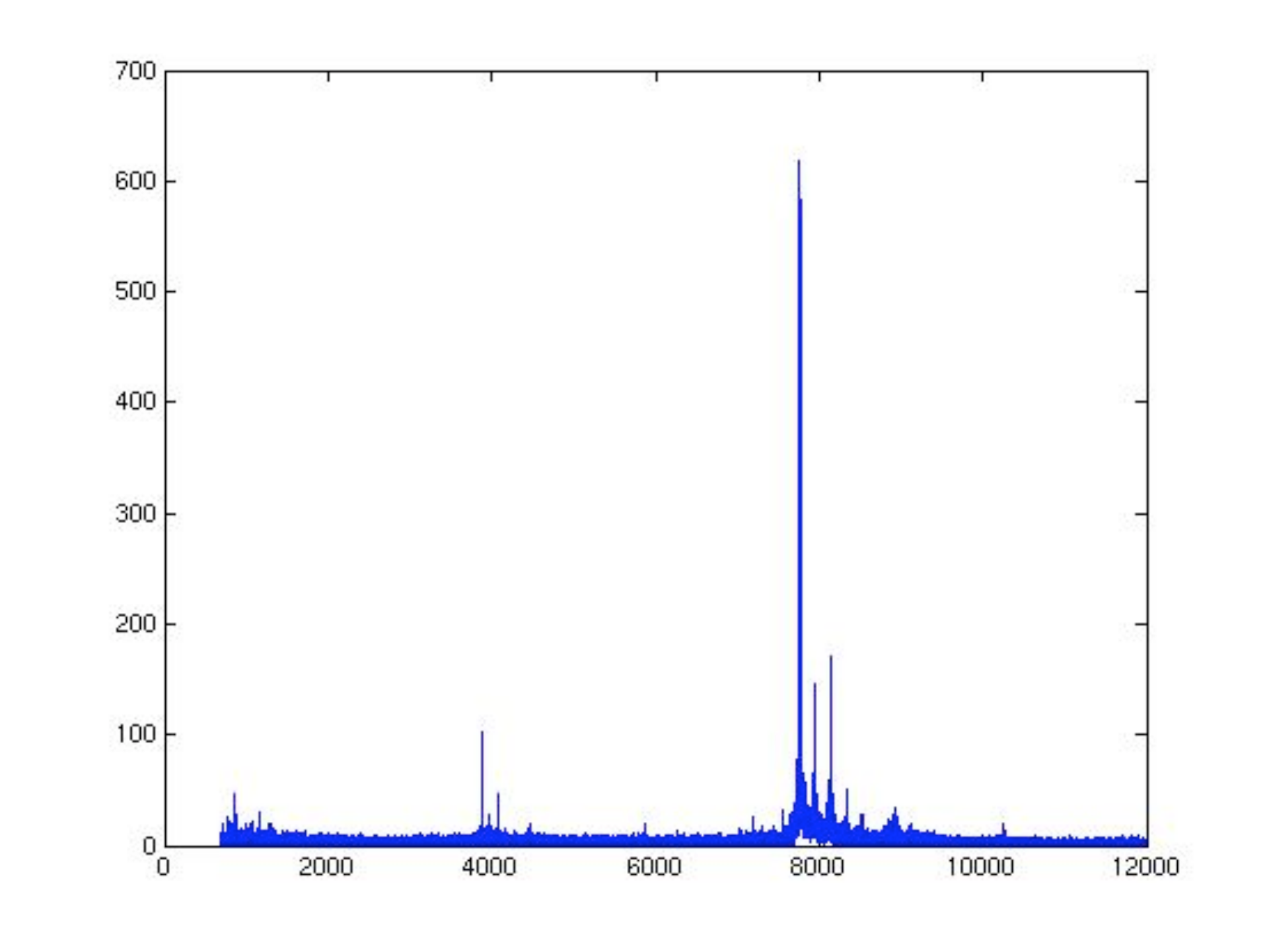}
\includegraphics[width=6cm]{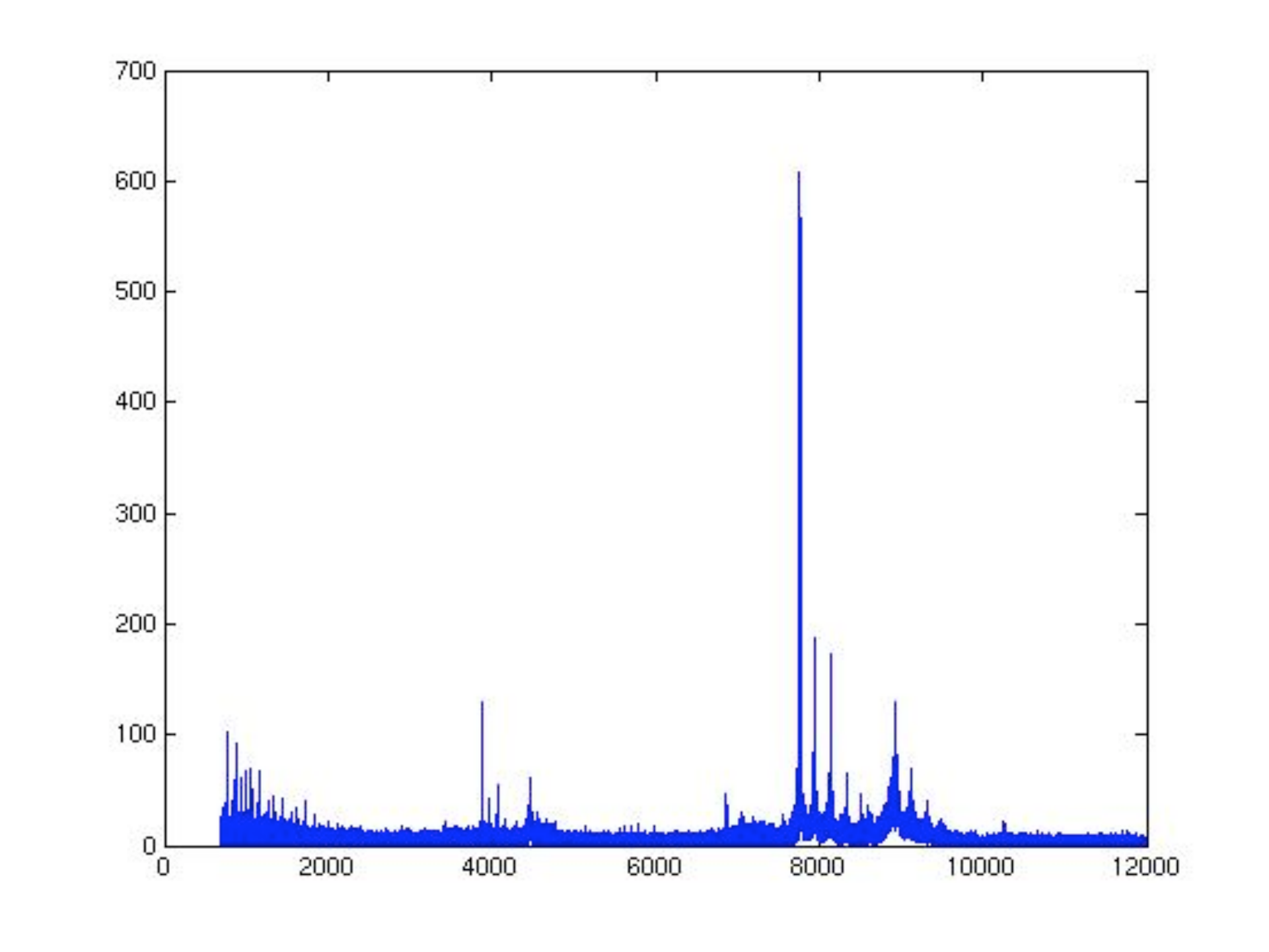}
\end{center}
\vspace{-15pt} \caption{Spectrogram of a healthy (left panel) and a ovarian cancer suffering woman (right panel).} \label{paisanas}
\end{figure}

The data to be analyzed in this section consists in the mass spectra from blood samples of 216 women of which, 121 suffer from an ovarian cancer condition and the remaining  95 are healthy women which were taken as control group. We refer to  \cite{ban03} for a previous analysis of these data with a detailed discussion of their medical aspects, see also \cite{cues06} for further statistical analysis of these data.

A spectrogram is a curve showing the number of molecules (or
fragments) found for every mass/charge ratio and, the idea behind spectrograms, is to control the amount of proteins  produced in cells since, when cancer starts to grow, its cells produce a different kind of proteins than those produced by healthy cells. Moreover, the amount of common produced proteins may be different. Proteomics, broadly speaking, consists of a family of procedures
allowing researchers to analyze proteins. In particular, here we
are interested in some techniques which allow to separate mixtures
of complex molecules according to the rate mass/charge (observe that,  molecules with the same mass/charge ratio are indistinguishable with a spectrogram). 

We have processed the data as follows: we have restricted ourselves to the interval 
mass charge (horizontal axis) $[7000,9500]$. Then, in order to have all the spectra defined in a common equi-spaced grid, we have smoothed them via a Nadaraya-Watson smoother. Finally, every function has been divided by its maximum, in order to have all the values scaled in the common interval $[0,1]$. Observe that our interest is to find the location of maxima amount of molecules more than the corresponding heights. 

To build the classifier introduced in (\ref{clasifgen}) we have taken $5$ 
nearest neighbor classifiers, with $k=3,5,7,9$ neighbors. We have implemented the cross validation method in a grid for $(\alpha,l)$, with $\alpha$ taking the values $0,1/5,2/5$ and $l$  taking $60$ values $l=20,21,\dots,80$. The minimum of the misclassification error was attained for $\alpha=2$ and $l=31,\dots,36$ in whose case the accuracy obtained was 95\%.

\section{Concluding remarks}
\begin{itemize}
\item  We introduce a new nonlinear aggregating method for supervised
classification in a general setup built up from a family of
 classifiers $g_{1k},\dots,g_{Mk}$.  It combines the decision of the M experts according to a ``coincidence opinion" with respect to the new data we want to classify.
 \item The new method, besides being easy to implement, is particularly well designed for high dimensional and functional data. The 
 method is not local, and the use of the inverse functions prevent from the curse of dimensionality that suffers all local methods.
 \item We obtain consistency and rates of convergence under very mild conditions on a general metric space setup.
 \item An optimality result is obtained in the sense that the nonlinear  aggregation rule behaves asymptotically as  well as
the best one among  the $M$ classifiers  (experts) $g_{1k},\dots,g_{Mk}$. 
\item A small simulation study  confirms
the asymptotic results for moderate sample sizes. In particular it is very well behaved for 
high--dimensional and functional data.
\item In a well known spectrogram curves dataset, we obtain a very good 
performance, classifying 95\%, very close to the best known results for these data.
\item Although we have implemented cross validation to choose the parameters $(\alpha,l)$ in Section \ref{realdata}, conditions
for the validity of this procedure remains as an open problem.
\end{itemize}
 
\section{Appendix: Proof of results}
To prove Theorem \ref{consistencia} we will need the following Lemma.
\begin{lem}\label{lema}
Let $f(x)$ be a classifier built up from the training sample $\mathcal{D}_k$ such that
$\mathbb P_{\mathcal{D}_k}(f(X)\neq g^*(X)) \rightarrow 0$ when
$k\rightarrow \infty$. Then, $\mathbb P_{\mathcal{D}_k}(f(X)\neq Y)-L^*\rightarrow 0$.
\end{lem}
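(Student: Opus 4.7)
The plan is to reduce this to the classical identity relating the excess risk of any classifier to its disagreement with the Bayes rule, weighted by $|2\eta(X)-1|$. Concretely, I would argue that for any fixed (measurable) classifier $g:\mathcal{F}\to\{0,1\}$ the pointwise excess conditional error is
\[
\mathbb{P}\bigl(g(X)\neq Y \mid X=x\bigr) - \mathbb{P}\bigl(g^*(X)\neq Y \mid X=x\bigr)
= |2\eta(x)-1|\,\mathbb{I}_{\{g(x)\neq g^*(x)\}},
\]
which follows from writing each conditional error as $1-[\eta(x)\mathbb{I}_{\{g(x)=1\}}+(1-\eta(x))\mathbb{I}_{\{g(x)=0\}}]$ and checking the two cases $\eta(x)>1/2$ and $\eta(x)\le 1/2$ separately.

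Next, I would apply this identity to $g=f$, which, once we condition on $\mathcal{D}_k$, is a deterministic measurable function of $X$. Taking expectation over $X$ conditional on $\mathcal{D}_k$ yields
\[
\mathbb{P}_{\mathcal{D}_k}\bigl(f(X)\neq Y\bigr) - L^*
= \mathbb{E}\bigl[\,|2\eta(X)-1|\,\mathbb{I}_{\{f(X)\neq g^*(X)\}}\,\bigm|\,\mathcal{D}_k\bigr].
\]

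Then, using the trivial bound $|2\eta(X)-1|\le 1$ almost surely, I conclude that
\[
0 \le \mathbb{P}_{\mathcal{D}_k}\bigl(f(X)\neq Y\bigr) - L^* \le \mathbb{P}_{\mathcal{D}_k}\bigl(f(X)\neq g^*(X)\bigr),
\]
and the right-hand side tends to $0$ as $k\to\infty$ by hypothesis, finishing the proof.

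There is essentially no obstacle here: the whole argument is the standard pointwise-comparison trick combined with the trivial majorant of $|2\eta-1|$. The only small care needed is to keep track of the conditioning on $\mathcal{D}_k$ so that $f$ can be treated as a fixed measurable function when applying the identity; nothing else requires verification.
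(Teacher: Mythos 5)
Your proof is correct, and it reaches the same pivotal inequality as the paper, namely $0 \le \mathbb{P}_{\mathcal{D}_k}\big(f(X)\neq Y\big)-L^* \le \mathbb{P}_{\mathcal{D}_k}\big(f(X)\neq g^*(X)\big)$, but by a genuinely different route. The paper argues purely at the level of events: it splits $\{f(X)\neq Y\}$ according to whether $Y=g^*(X)$ or not, rearranges to express the excess risk as $\mathbb{P}_{\mathcal{D}_k}\big(f(X)\neq g^*(X)\big)-\mathbb{P}_{\mathcal{D}_k}\big(g^*(X)\neq Y, f(X)=Y\big)$, and drops the subtracted term. You instead invoke the classical pointwise identity for the excess conditional risk, $|2\eta(x)-1|\,\mathbb{I}_{\{f(x)\neq g^*(x)\}}$, integrate, and bound $|2\eta-1|\le 1$. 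Your route buys three things: the lower bound $0$ comes for free; you get an exact expression showing precisely where the majorization is lossy (points with $\eta(x)$ near $1/2$), which would let you sharpen part (b) of Theorem 1 under a margin condition; and it sidesteps a bookkeeping slip in the paper's displayed identity --- the event $\{f(X)\neq Y, Y=g^*(X)\}$ equals $\{f(X)\neq g^*(X), Y=g^*(X)\}$, not $\{f(X)\neq g^*(X)\}$, so the paper's intermediate equality is really only a $\le$ (the correct exact identity is $\mathbb{P}(f\neq g^*)-2\,\mathbb{P}(f\neq g^*, Y\neq g^*)$), though the final inequality it needs survives. What the paper's approach buys is elementarity: it never mentions $\eta$ and uses only set algebra on binary labels. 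Your remark on conditioning is exactly the right care to take, and is justified because $(X,Y)$ is independent of $\mathcal{D}_k$, so conditionally on $\mathcal{D}_k$ the pair $(X,Y)$ keeps its law and $f$ is a fixed measurable function.
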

\begin{proof} [Proof of Lemma \ref{lema}]
First we write,
\begin{align}\label{error}
\mathbb P_{\mathcal{D}_k}\big(f(X)\neq Y\big)- L^* &= \mathbb P_{\mathcal{D}_k}\big(f(X)\neq
Y\big)-P\big(g^*(X)\neq Y\big) \nonumber\\ &= \mathbb P_{\mathcal{D}_k}\big(f(X)\neq Y,Y=g^*(X)\big)
\nonumber\\ &\hspace{0.3cm}+ \mathbb P_{\mathcal{D}_k}\big(f(X)\neq Y, Y\neq g^*(X)\big)
-P\big(g^*(X)\neq Y\big)\nonumber \\ &= \mathbb P_{\mathcal{D}_k}\big(f(X)\neq g^*(X)\big)\\
&\hspace{0.3cm} + \mathbb P_{\mathcal{D}_k}\big(f(X)\neq Y, Y\neq g^*(X)\big)-P\big(g^*(X)\neq
Y\big) \nonumber \\ &= \mathbb P_{\mathcal{D}_k}\big(f(X)\neq g^*(X)\big) -
\mathbb P_{\mathcal{D}_k}\big(g^*(X)\neq Y, f(X)= Y\big), \nonumber
\end{align}
where in the last equality we have used that 
\[
P\big(g^*(X)\neq Y\big)= \mathbb P_{\mathcal{D}_k}\big(g^*(X)\neq Y, f(X)\neq Y\big) +
\mathbb P_{\mathcal{D}_k}\big(g^*(X)\neq Y, f(X)= Y\big),
\]
implies 
\[
 \mathbb P_{\mathcal{D}_k}\big(g^*(X)\neq Y,f(X)= Y\big) = \mathbb P_{\mathcal{D}_k}\big(g^*(X)\neq
Y, f(X)\neq Y\big) - P\big(g^*(X)\neq Y\big).
\]
Therefore, replacing in (\ref{error}) we get that
\begin{align} \label{boundl3}
\mathbb P_{\mathcal{D}_k}\big(f(X)\neq Y\big)- L^* &=  \mathbb P_{\mathcal{D}_k}\big(f(X)\neq
g^*(X)\big) -
\mathbb P_{\mathcal{D}_k}\big(g^*(X)\neq Y, f(X)= Y\big)\nonumber \\ &\le \mathbb
P_{\mathcal{D}_k}\big(f(X)\neq g^*(X)\big),
\end{align}
which by hypothesis converges to zero as $k \to \infty$ and the Lemma is proved.
\end{proof}
\begin{proof} [Proof of Theorem \ref{consistencia}]
We will prove part b) of the Theorem since part a) is a direct consequence of it. By (\ref{boundl3}), 
it suffices to prove that, for $k$ large enough: 
$$\mathbb P_{\mathcal{D}_k}(g_T(X,\alpha)\neq g^*(X))=\mathcal{O}\Big(\max\big\{\exp(-C(n-k)),\bm{\beta_{Rk}}\big\}\Big).$$ 
We first split $\mathbb P_{\mathcal{D}_k}(g_T(X,\alpha)\neq g^*(X))$ into two terms,
\begin{align*} 
\mathbb P_{\mathcal{D}_k}(g_T(X,\alpha)\neq g^*(X)) &= \mathbb P_{\mathcal{D}_k}(g_T(X,\alpha)\neq g^*(X),g^*(X)=1) \nonumber \\ &\hspace{0.3cm}+\mathbb P_{\mathcal{D}_k}(g_T(X,\alpha)\neq g^*(X),g^*(X)=0) \doteq I +II.
\end{align*}
Then we will prove that, for $k$ large enough,
$$I=\mathcal{O}\Big(\max\big\{\exp(-C_1(n-k)),\bm{\beta_{Rk}}\big\}\Big),$$ 
for some arbitrary constant $C_1$. The proof that 
$$II=\mathcal{O}\Big(\max\big\{\exp(-C_2(n-k)),\bm{\beta_{Rk}}\big\}\Big),$$  
for some arbitrary 
constant $C_2$ is completely analogous and we omit it.
Finally, taking $C=\min\{C_1,C_2\}$, the proof will be completed.
In order to deal with term $I$, let us define the vectors 
\begin{align*}
\mathbf{g_{Rk}}(X) =\  & \big(g_{1k}(X),\dots,g_{Rk}(X)\big)\in \{0,1\}^R,\\ 
\nu(X)= \ & \Big(1,\dots,1,g_{(R+1)k}(X),\dots,g_{Mk}(X)\Big)\in \{0,1\}^M.
\end{align*}
Then, 
\begin{align*}
I &= \mathbb P_{\mathcal{D}_k}(g_T(X,\alpha)\neq g^*(X),g^*(X)=1)  \nonumber\\
 &\le \mathbb P_{\mathcal{D}_k}(g_T(X,\alpha)\neq g^*(X),g^*(X)=1, \mathbf{g_{Rk}}(X)=\mathbf{1})
\nonumber \\ &\hspace{0.5cm}+\sum_{m=1}^R \mathbb P_{\mathcal{D}_k}(g_T(X,\alpha)\neq
g^*(X),g^*(X)=1,g_{mk}(X)=0)\nonumber \\
&\le \mathbb P_{\mathcal{D}_k}(g_T(X,\alpha)\neq g^*(X),g^*(X)=1,
\mathbf{g_{Rk}}(X)=\mathbf{1})\nonumber \\ 
&\hspace{0.5cm}+\sum_{m=1}^R \mathbb P_{\mathcal{D}_k}(g^*(X)\neq g_{mk}(X)) \\ &\le \mathbb
P_{\mathcal{D}_k}\Big(T_n(\mathbf{g_k}(X), \alpha)\leq
1/2\big|g^*(X)=1,\mathbf{g_{Rk}}(X)=\mathbf{1}\Big) 
 \\ &\hspace{0.5cm}+\sum_{m=1}^R \mathbb P_{\mathcal{D}_k}(g^*(X)\neq g_{mk}(X))
\\ &\doteq I_A + I_B.\nonumber 
\end{align*} 
Observe that, conditioning to $\mathbf{g_{Rk}}(X)=\mathbf{1}$ and defining 
$$
Z_{j} \doteq \mathbb{I}_{\left\{\frac{1}{M}\sum_{m=1}^M \mathbb{I}_{\{g_{mk}(X_j)= \nu(m)\}} \ge
1-\alpha\right\}},
$$
we can rewrite $T_n(\mathbf{g_{k}}(X), \alpha) $ as
\[
T_n(\mathbf{g_{k}}(X), \alpha) = \frac{\sum_{j=k+1}^n  Z_j Y_j}{\sum_{i=k+1}^n Z_i}.
\]
Therefore, 
\begin{align}\label{proba}
I_A&= \mathbb P_{\mathcal{D}_k}\left(\frac{\frac{1}{n-k}\sum_{j=k+1}^n  Z_j
Y_j}{\frac{1}{n-k}\sum_{i=k+1}^n Z_i} \le \frac{1}{2} \Big| g^*(X)=1,
\mathbf{g_{Rk}}(X)=\mathbf{1}\right)\nonumber \\ &= \mathbb
P_{\mathcal{D}_k}\left(\frac{1}{n-k}\sum_{j=k+1}^n  Z_j (Y_j-1/2) \le 0  \Big| g^*(X)=1,
\mathbf{g_{Rk}}(X)=\mathbf{1}\right). 
\end{align}
In order to use a concentration inequality to bound this probability, we need to compute the
expectation of $Z_j (Y_j-1/2) =
Z_j Y_j - Z_j/2 $. To do this, observe that 
\[
E(Z_j Y_j)=\mathbb P_{\mathcal{D}_k}\left(\frac{1}{M}\sum_{m=1}^M \mathbb{I}_{\{g_{mk}(X)= \nu(m) 
\}} \ge1-\alpha,Y=1\right),
\]
and 
\begin{equation*}\label{esp1}
E(Z_j) = \mathbb P_{\mathcal{D}_k}\left(\frac{1}{M}\sum_{m=1}^M \mathbb{I}_{\{g_{mk}(X)=\nu(m)\}} 
\ge1-\alpha \right).
\end{equation*}
Since
\begin{equation*} \label{inc1}
\left\{ \mathbf{g_{Rk}}(X)=\mathbf{1} \right\} \subset \left\{ \frac{1}{M}\sum_{m=1}^M
\mathbb{I}_{\{g_{mk}(X)= \nu(m)\}} \ge 1-\alpha \right\}\doteq A_\alpha,
\end{equation*}
we have,
\begin{align}\label{esp2}
E(Z_j Y_j)-E(Z_j)/2 & = \mathbb P_{\mathcal{D}_k}(V=1|A_\alpha)\mathbb 
P_{\mathcal{D}_k}(A_\alpha)-\mathbb P_{\mathcal{D}_k}(A_\alpha)/2 \nonumber\\
& = \mathbb P_{\mathcal{D}_k}(A_\alpha)\Big[\mathbb P_{\mathcal{D}_k}(Y=1|A_\alpha)-1/2\Big]\\
& \geq \mathbb{P}_{\mathcal{D}_k}\big(\mathbf{g_{Rk}}(X)=1\big)\Big[\mathbb
P_{\mathcal{D}_k}(Y=1|A_\alpha)-1/2\Big].\nonumber
\end{align}
Now, since for $m=1,\ldots,R$, $g_{mk} \to g^*$ in probability as $k \to \infty$, 
\begin{equation} \label{lim1}
\mathbb P_{\mathcal{D}_k}\left(\mathbf{g_{Rk}}(X)=\mathbf{1}\right) \to \mathbb{P}(g^*(X)=1)\doteq 
p^*>0.
\end{equation}
On the other hand, we have that, for $k$ large enough,
$\mathbb P_{\mathcal{D}_k}(Y=1|A_\alpha)> 1/2$. Indeed, for $m=1,\dots,R$, let us consider the
events $B_{mk}=\{g_{mk}(X)=g^*(X)\}$ which, by hypothesis, for $k$ large enough verify
$$
\mathbb P \big(\cap_{m=1}^R B_{mk}\big)>1-\varepsilon,
$$
for all $\varepsilon>0$. In particular, we can take $\varepsilon>0$ such that $\mathbb
P(Y=1|g^*(X)=1)(1-\varepsilon)>1/2$. This implies that 
\begin{align} \label{eq}
\mathbb P_{\mathcal{D}_k}(Y=1|A_\alpha)= \ & \frac{\mathbb 
P_{\mathcal{D}_k}(Y=1,A_\alpha,\cap_{m=1}^R B_{mk})}{\mathbb P_{\mathcal{D}_k}(A_\alpha)} 
\nonumber\\
&\quad + \frac{\mathbb P_{\mathcal{D}_k}(Y=1,A_\alpha,(\cap_{m=1}^R B_{mk})^c)}{\mathbb 
P_{\mathcal{D}_k}(A_\alpha)}\nonumber\\
\geq \ &\frac{\mathbb P_{\mathcal{D}_k}(Y=1,A_\alpha,\cap_{m=1}^R B_{mk})}{\mathbb
P_{\mathcal{D}_k}(A_\alpha)}\\
> \ &\frac{\mathbb P_{\mathcal{D}_k}(Y=1,A_\alpha\big|\cap_{m=1}^RB_{mk})} {\mathbb
P_{\mathcal{D}_k}(A_\alpha)}(1-\varepsilon).\nonumber
\end{align}
Conditioning to $\cap_{m=1}^R B_{mk}$ the event $A_\alpha$ equals $C_\alpha$ given by
\begin{equation} \label{condaalpha}
\left\{R\mathbb{I}_{\{g^*(X)= 1\}} +\sum_{m=R+1}^M \mathbb{I}_{\{g_{mk}(X)= \nu(m)\}}\ge M(1-\alpha)
\right\}\doteq C_\alpha.
\end{equation}
However, $\alpha<1/2$ imply that $C_\alpha=\{g^*(X)=1\}$. Indeed, from the inequality  $R\geq 
M(1-\alpha)$, it is clear that $\{g^*(X)=1\}\subset C_\alpha$. 
On the other hand, $R\geq M(1-\alpha)>M/2$ and $\alpha<1/2$ imply that $M-R<M/2<M(1-\alpha)$, 
and so the sum in the second term of (\ref{condaalpha}) is at most $M-R$ and consequently, 
$\{g^*(X)=1\}^c\subset C_\alpha^c$. Then, combining this fact with (\ref{eq}) we have that, for $k$ 
large enough

\begin{align}\label{otraeq}
\mathbb P_{\mathcal{D}_k}(Y=1|A_\alpha)&\geq \  \frac{\mathbb
P_{\mathcal{D}_k}(Y=1,g^*(X)=1\big|\cap_{m=1}^RB_{mk})}{\mathbb
P_{\mathcal{D}_k}(g^*(X)=1)}(1-\varepsilon) \nonumber \\
&=\ \frac{\mathbb P_{\mathcal{D}_k}(Y=1,g^*(X)=1)}{\mathbb 
P_{\mathcal{D}_k}(g^*(X)=1)}(1-\varepsilon)\\
&=\ \mathbb P(V=1|g^*(X)=1)(1-\varepsilon) \nonumber \\& >1/2. \nonumber
\end{align}
Therefore, from (\ref{lim1}) and (\ref{otraeq}) in (\ref{esp2}) we get 
\[
 E(Z_j Y_j)-E(Z_j)/2  > c >0.
\]
Going back to (\ref{proba}), conditioning to $\nu(X)$ and using the Hoeffding inequality 
for $|Z_j (Y_j-1/2)| \le 1/2$, for $k$ large enough we have
\begin{small}
\begin{align*}
I_A 
&= \mathbb P_{\mathcal{D}_k}\left(\frac{1}{n-k}\sum_{j=k+1}^n  -\big(Z_j (Y_j-1/2) - E(Z_j
(Y_j-1/2)) \big)\ge c \Big| g^*(X)=1, \mathbf{g_{Rk}}(X)=\mathbf{1}\right) \\ &\le 
\exp \left\{- C_1(n-k)\right\},
\end{align*}
\end{small}
with $C_1 = 2c^2$. On the other hand, by hypothesis we have  
\begin{equation*}
I_B=\sum_{m=1}^M \mathbb P_{\mathcal{D}_k}(g^*(X)\neq g_{mk}(X))=\mathcal{O}(\bm{\beta_{Rk}}),
\end{equation*}
which concludes the proof. 
\end{proof}

\begin{proof}[Proof of Theorem \ref{optimalidad}]
First we write,
\begin{align}\label{primera}
\mathbb{P}_{\mathcal{D}_k}\big(g_T(X) \neq Y \big) &=
\mathbb{P}_{\mathcal{D}_k}\big(T_n(\mathbf{g_k}(X)) > 1/2, Y = 0\big) \nonumber \\ &\hspace{1cm}+
\mathbb{P}_{\mathcal{D}_k}\big(T_n(\mathbf{g_k}(X)) \leq 1/2, Y = 1\big)\nonumber  \\&=\sum_{\nu \in \mathbb{C}} \mathbb{P}_{\mathcal{D}_k}\big(T_n(\mathbf{g_k}(X)) > 1/2, (X,Y) \in A_{\nu}^0 \big) \\ &\hspace{1cm}+ \sum_{\nu \in \mathbb{C}}\mathbb{P}_{\mathcal{D}_k}\big(T_n(\mathbf{g_k}(X)) \leq 1/2, (X,Y) \in A_{\nu}^1\big) \nonumber\\ &\doteq I + II.\nonumber
\end{align}

Let us take $\nu$ fixed. Observe that in this case, $T_n(\mathbf{g_k(X)})$ depends only on the subsample $\mathcal{E}_l$, therefore 
the events $(X_j,Y_j)\in A_\nu^{i}$ and $(X,Y)\in A_\nu^{i}$ are independent for all $i=0,1$, $j=k+1,\dots,n$. Then,
 \begin{align*}
I &= \sum_{\nu \in \mathbb{C}}\mathbb{P}_{\mathcal{D}_k}\big(T_n(\mathbf{g_k}(X),0) > 1/2,
(X,Y) \in A_{\nu}^0\big) \\ &= \sum_{\nu \in \mathbb{C}}
\mathbb{P}_{\mathcal{D}_k}\left(\frac{\sharp\{j:(X_j,Y_j) \in A_{\nu}^1\}}{l}
> \frac{\sharp\{j:(X_j,Y_j) \in A_{\nu}^0\}}{l}, (X,Y) \in
A_{\nu}^0 \right)\\
 &= \sum_{\nu \in \mathbb{C}}
\mathbb{P}_{\mathcal{D}_k}\left(\frac{\sharp\{j:(X_j,Y_j) \in A_{\nu}^1\}}{l}
> \frac{\sharp\{j:(X_j,Y_j) \in A_{\nu}^0\}}{l}\right)\mathbb{P}_{\mathcal{D}_k}\left( (X,Y) \in
A_{\nu}^0 \right)\\
&= \sum_{\nu \in \mathbb{C}} \mathbb{P}_{\mathcal{D}_k}\left(\frac{1}{l}\sum_{j=k+1}^n \mathbb{I}_{\{(X_j,Y_j) \in A_{\nu}^1\}} -\mathbb{I}_{\{(X_j,Y_j) \in A_{\nu}^0\}} > 0\right)\mathbb{P}_{\mathcal{D}_k}\left((X,Y) \in A_{\nu}^0 \right).
\end{align*}
Let us define 
\[
T_j^{\nu} \doteq \mathbb{I}_{\{(X_j,Y_j) \in A_{\nu}^1\}} -\mathbb{I}_{\{(X_j,Y_j) \in A_{\nu}^0\}},
\]
\[
p_\nu^i \doteq  \mathbb{P}_{\mathcal{D}_k}((X,Y)\in A_{\nu}^i), \hspace{0.5cm} i=0,1,
\]
\[
p_\nu\doteq E(T_j^{\nu}) =p_\nu^1-p_\nu^0,   K_1=2\min_{\{\nu:p_\nu\neq 0\}}p_\nu^2,
\text{ and }\sigma_\nu^2 \doteq E((T_j^{\nu})^2) =p_\nu^1+p_\nu^0.
\]
To bound term $I$, we will consider 3 cases, $p_\nu<0$, $p_\nu>0$ and $p_\nu=0$. Let us first assume that $p_\nu<0$. In this case,  using the Hoeffding inequality we have,

 \begin{equation}\label{Ia}
 \mathbb{P}_{\mathcal{D}_k}\left(\frac{1}{l} \sum_{j=k+1}^n (T_j^{\nu}-p_\nu)> -p_\nu\right)= \mathcal{O}_k\big(\exp(-K_1l)\big) .
\end{equation}
If $p_\nu>0$, using Hoeffding inequality again we get
\begin{align} \label{Ib}
\mathbb{P}_{\mathcal{D}_k}\left(\frac{1}{l}\sum_{j=k+1}^nT_j^{\nu}> 0\right)&= 1- \mathbb{P}_{\mathcal{D}_k}\left(\frac{1}{l}\sum_{j=k+1}^n T_j^{\nu}\leq 0\right)\nonumber\\&=1+\mathcal{O}_k\big(\exp(-K_1l)\big).
\end{align}

If $p_\nu=0$, since for all $\nu$ and $j$, $E(|T^\nu_j|^3)=1$, using the Berry-Esseen inequality we get
\begin{align} \label{Ib1}
\mathbb{P}_{\mathcal{D}_k}\left(\frac{1}{l}\sum_{j=k+1}^nT_j^{\nu}> 0\right)&=\Big[\frac{1}{2}+\mathcal{O}_k\big(l^{-1/2}\big)\Big]\mathbb{I}_{\{\sigma_\nu^2>0\}}.
\end{align}
Observe that, since $\mathbb{P}(Y=1)=\sum_\nu p_\nu^1$ and $\mathbb{P}(Y=0)=\sum_\nu p_\nu^0$, there exists $\nu$ such that $\sigma_\nu^2 > 0$. Then, from (\ref{Ia}), (\ref{Ib}) and (\ref{Ib1}) we get
\begin{small}
\begin{align} \label{eqI}
I=& \sum_{\nu \in \mathbb{C}} \mathbb{P}_{\mathcal{D}_k}\left(\frac{1}{l}\sum_{j=k+1}^n T_j^{\nu} > 0\right)p_\nu^0\nonumber\\=&\sum_{p_\nu<0}\mathbb{P}_{\mathcal{D}_k}\left(\frac{1}{l}\sum_{j=k+1}^n T_j^{\nu} > 0\right)p_\nu^0+\sum_{p_\nu>0}\mathbb{P}_{\mathcal{D}_k}\left(\frac{1}{l}\sum_{j=k+1}^n  T_j^{\nu}  > 0\right)p_\nu^0 \nonumber\\
&\hspace{1cm}+\sum_{p_\nu=0}\mathbb{P}_{\mathcal{D}_k}\left(\frac{1}{l}\sum_{j=k+1}^n  T_j^{\nu}  > 0\right)p_\nu^0 \\&=\mathcal{O}_k\big(\exp(-K_1l)\big)\mathbb{I}_{\{\exists \nu: p_\nu\neq 0\}} +\mathcal{O}_k\big(l^{-1/2}\big)\mathbb{I}_{\{\exists \nu: (p_\nu=0,\sigma_\nu^2 > 0)\}}  \nonumber\\ &\hspace{1cm}+\sum_{p_\nu>0} p_\nu^0 +\frac{1}{2}\sum_{p_\nu=0} p_\nu^0.\nonumber
\end{align}
\end{small}

Analogously, it is easy to prove that 
\begin{align} \label{eqII}
 II &= \mathcal{O}_k\big(\exp(-K_1l)\big)\mathbb{I}_{\{\exists \nu: p_\nu\neq 0\}} +\mathcal{O}_k\big(l^{-1/2}\big)\mathbb{I}_{\{\exists \nu: (p_\nu=0,\sigma^2_\nu > 0)\}} \nonumber \\&\hspace{1cm}+ \sum_{p_\nu<0} p_\nu^1+\frac{1}{2} \sum_{p_\nu=0}p_\nu^0,
\end{align}
where in the last term we have used that $p_\nu=0$ implies $p_\nu^0= p_\nu^1$. Therefore, with (\ref{eqI}) and (\ref{eqII}) in  (\ref{primera}) we get,
\begin{small}
\begin{align} \label{I+II}
\mathbb{P}_{\mathcal{D}_k}\big(g_T(X) \neq Y 
\big)&=\mathcal{O}_k\big(\exp(-K_1l)\big)\mathbb{I}_{\{\exists \nu: p_\nu\neq 0\}} 
+\mathcal{O}_k\big(l^{-1/2}\big)\mathbb{I}_{\{\exists \nu: (p_\nu=0,\sigma_\nu^2 > 0)\}}\nonumber
\\&\hspace{1cm}+\sum_{p_\nu>0}p_\nu^0+ \sum_{p_\nu<0}p_\nu^1+\sum_{p_\nu=0}p_\nu^0 \\ &= 
\mathcal{O}_k\big(\exp(-K_1l)\big)\mathbb{I}_{\{\exists \nu: p_\nu\neq 0\}} 
+\mathcal{O}_k\big(l^{-1/2}\big)\mathbb{I}_{\{\exists \nu: (p_\nu=0,\sigma_\nu^2 > 0)\}}\nonumber 
\\&\hspace{1cm}+ \sum_{\substack{\nu:\nu(m)=0\\ p_\nu>0}}p_\nu^0+\hspace{-0.1cm}
\sum_{\substack{\nu:\nu(m)=1\\ p_\nu>0}}p_\nu^0+ \hspace{-0.1cm}
\sum_{\substack{\nu:\nu(m)=0 \\ p_\nu<0}}p_\nu^1+\hspace{-0.1cm}
\sum_{\substack{\nu:\nu(m)=1 \\ p_\nu<0}}p_\nu^1+\hspace{-0.1cm}
\sum_{p_\nu=0}p_\nu^0.\nonumber
\end{align}
\end{small}
On the other hand, for each $m$ we have,
\begin{small}
\begin{align}\label{unosolo}
\noindent \mathbb{P}_{\mathcal{D}_k}\big(g_{mk}(X) \neq Y\big) 
&=
\mathbb{P}_{\mathcal{D}_k}\big(g_{mk}(X)=0, Y=1\big) + \mathbb{P}_{\mathcal{D}_k}\big(g_{mk}(X)=1,
Y=0\big) \nonumber \\ 
&=\mathbb{P}_{\mathcal{D}_k} \left( \bigcup_{\nu:\nu(m) = 0}  \hspace{-0.2cm} (X,Y) \in
A_{\nu}^1 \right) + \mathbb{P}_{\mathcal{D}_k}
\left(\bigcup_{\nu:\nu(m) = 1} \hspace{-0.2cm} (X,Y) \in A_{\nu}^0\right)  \nonumber\\ 
&= \sum_{\nu:\nu(m) = 0} p_\nu^1+ \sum_{\nu:\nu(m) = 1} p_\nu^0\\
&=  \sum_{\substack{\nu:\nu(m) = 0\\ p_\nu<0}} p_\nu^1+\hspace{-0.3cm}\sum_{\substack{\nu:\nu(m) = 0\\ p_\nu>0}} p_\nu^1+\hspace{-0.3cm}\sum_{\substack{\nu:\nu(m) = 1\\ p_\nu<0}} p_\nu^0+\hspace{-0.3cm}\sum_{\substack{\nu:\nu(m) = 1\\ p_\nu>0}} p_\nu^0+\hspace{-0.1cm}\sum_{p_\nu=0} p_\nu^0,\nonumber 
\end{align}
\end{small}
where in the last equality we used again that  $p_\nu=0$ implies $p_\nu^0= p_\nu^1$ to joint 
\[
\sum_{\substack{\nu:\nu(m) = 1\\ p_\nu=0}} p_\nu^0 + \sum_{\substack{\nu:\nu(m) = 0\\ p_\nu=0}} p_\nu^1 = \sum_{p_\nu=0} p_\nu^0. 
\]

Therefore, from (\ref{I+II}) and (\ref{unosolo}) we get
\begin{align*}
& \hspace{-1.5cm}\mathbb{P}_{\mathcal{D}_k}\big(g_T(X) \neq Y \big)-\mathbb{P}_{\mathcal{D}_k}\big(g_{mk}(X) \neq Y\big)\\
&=\mathcal{O}_k\big(\exp(-K_1l)\big)\mathbb{I}_{\{\exists \nu: p_\nu\neq 0\}} +\mathcal{O}_k\big(l^{-1/2}\big)\mathbb{I}_{\{\exists \nu: (p_\nu=0,\sigma_\nu^2 >0)\}}\\&\hspace{2cm}+\sum_{\substack{\nu:\nu(m)=0\\ p_\nu>0}} (p_\nu^0-p_\nu^1) +\sum_{\substack{\nu:\nu(m)=1\\ p_\nu<0}} (p_\nu^1-p_\nu^0) \\ &\leq \mathcal{O}_k\big(\exp(-K_1l)\big)\mathbb{I}_{\{\exists \nu: p_\nu\neq 0\}} +\mathcal{O}_k\big(l^{-1/2}\big)\mathbb{I}_{\{\exists \nu: (p_\nu=0,\sigma^2_\nu > 0)\}}.
\end{align*}
Observe that, if $p_\nu\neq 0$ for all $\nu$ we get
$\mathbb{P}_{\mathcal{D}_k}\big(g_T(X) \neq Y \big) -\mathbb{P}_{\mathcal{D}_k}\big(g_{mk}(X) \neq Y\big)\leq \mathcal{O}_k\big(\exp(-lK_1)\big)$.

\end{proof}

\section*{Acknowledgment} We would like to thank Gerard Biau and James Malley for helpful suggestions. We also thanks to the referees for their helpful suggestions which improved the presentation of this final version.

\section*{References}

\end{document}